\newcommand{\compactlist}{\begin{list}{$\bullet$}{\setlength{\leftmargin}{1em}}}
\DeclareMathOperator{\cfk}{\it CFK}
\newcommand{\spinc}{\ifmmode{{\mathfrak s}}\else{${\mathfrak s}$\ }\fi}
\newcommand{\spinct}{\ifmmode{{\mathfrak t}}\else{${\mathfrak t}$\ }\fi}
\newcommand{\spincw}{\ifmmode{{\mathfrak w}}\else{${\mathfrak w}$\ }\fi}
\def\Z{\mathbb Z}
\def\R{\mathbb R}
\def\Spc{Spin$^c$}
\newtheorem{theorem}{Theorem}[section]
\newtheorem{question}[theorem]{Question}
\newtheorem{thm}[theorem]{Theorem}
\newtheorem*{theorem*}{Theorem}
\newtheorem{lemma}[theorem]{Lemma}
\newtheorem{corollary}[theorem]{Corollary}
\newtheorem{cor}[theorem]{Corollary}
\newtheorem{proposition}[theorem]{Proposition}
\theoremstyle{definition}
\newtheorem{definition}[theorem]{Definition}
\newtheorem{example}[theorem]{Example}
\theoremstyle{remark}
\newtheorem*{ack}{Acknowledgements}
\newtheorem{remark}[theorem]{Remark}
\numberwithin{equation}{section}
\begin{document}
\title[The $\Upsilon$ function of $L$--space knots]{The $\Upsilon$ function of $L$--space knots is a Legendre transform}

\author{Maciej Borodzik}
\address{Institute of Mathematics, University of Warsaw, ul. Banacha 2,
02-097 Warsaw, Poland}
\email{mcboro@mimuw.edu.pl}

\author{Matthew Hedden} 
\address{Matthew Hedden: Department of Mathematics, Michigan State University, East Lansing, MI 48824 }
\email{mhedden@math.msu.edu}

\thanks{ The first author was supported by  Polish OPUS grant No 2012/05/B/ST1/03195}
\thanks{The second author was supported by NSF CAREER grant DMS-1150872, and an Alfred P. Sloan Research Fellowship}

 \thanks{\today}

%%%%%%%ABSTRACT%%%%%%%%%%%%%%

\begin{abstract}
Given an $L$--space knot  we show that its $\Upsilon$ function  is 
the Legendre transform of a counting  function equivalent to the $d$--invariants of its large surgeries. 
The unknotting obstruction obtained for the $\Upsilon$ function is,
in the case of $L$--space knots, contained in the $d$--invariants of large surgeries.
Generalizations apply for connected sums of $L$--space knots, which imply that the slice obstruction provided by $\Upsilon$ on the subgroup of concordance generated by $L$--space knots is no finer than that provided by the $d$--invariants.

\end{abstract}

\maketitle

\section{Introduction}

In this note we compare two useful invariants of the smooth concordance group coming from Heegaard Floer homology.  The first is the   $\Upsilon(t)$ function recently defined by Ozsv\'ath--Stipsicz--Szab\'o  \cite{OSS}, and the other  is the set of  $d$--invariants of  $3$-manifolds obtained by large surgery on a knot. The latter can be encoded in a function, denoted $J(x)$, determined by the knot Floer homology invariants.  Our main observation is the following result:

\begin{thm}\label{thm:legendre}
For  $L$--space knots $\Upsilon(t)$  is the Legendre transform of  the function $x\mapsto 2J(-x)$.
\end{thm}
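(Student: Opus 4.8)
The plan is to compute both sides of the asserted identity directly from the Alexander polynomial and to recognise one as the Legendre transform of the other; the common input is the staircase model of knot Floer homology. Since $K$ is an $L$--space knot, $\cfk^\infty(K)$ is filtered chain homotopy equivalent to a staircase complex $C$ determined by $\Delta_K(t)=\sum_{i=0}^{2n}(-1)^i t^{a_i}$, with $a_0>a_1>\dots>a_{2n}$, $a_0=g$ the Seifert genus and $a_{2n}=-g$. I will work with the generators $x_0,\dots,x_{2n}$ of $C$, each at algebraic level $0$ and Alexander grading $A(x_i)=a_i$, where the even $x_{2j}$ are cycles and $\partial x_{2j-1}=U^{\,a_{2j-2}-a_{2j-1}}x_{2j-2}+x_{2j}$. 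Walking along the staircase from $x_0$, which has Maslov grading $0$, yields $M(x_{2j})=-2\sum_{i=1}^{j}(a_{2i-2}-a_{2i-1})$.

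First I would treat the $\Upsilon$ side. In $\tcfk(K)$ the even generators are the only cycles, and the differentials take the form $\partial x_{2j-1}=v^{\alpha_j}x_{2j-2}+v^{\beta_j}x_{2j}$, the powers of the formal variable $v$ being forced by homogeneity of $\gr$; these relations identify the classes $[x_0],\dots,[x_{2n}]$ with one another up to multiplication by powers of $v$. Hence, for each fixed $t$, $\thfk(K)$ is generated by whichever $x_{2j}$ has the largest value of $\gr$, so that
\[
\Upsilon_K(t)=\max_{0\le j\le n}\gr(x_{2j})=\max_{0\le j\le n}\bigl(M(x_{2j})-t\,A(x_{2j})\bigr),
\]
a maximum of $n+1$ affine functions of $t$, the $j$--th with slope $-a_{2j}$ and $y$--intercept $M(x_{2j})$.

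Next I would match this with $J$. Recall that the $d$--invariants of large surgeries on $K$ are recorded by the integers $V_i=J(i)$ for $i\ge 0$, that $J$ is the counting function of the complement of the formal semigroup $S_K$ (defined by $\sum_{s\in S_K}t^s=t^g\Delta_K(t)/(1-t)$), and that for an $L$--space knot $V_i$ equals the number of elements of $\mathbb{Z}\setminus S_K$ lying in $[i+g,\infty)$. Re-reading the staircase walk above in semigroup terms shows that $-\tfrac12 M(x_{2j})=\sum_{i=1}^{j}(a_{2i-2}-a_{2i-1})$ is exactly the number of such elements in $[a_{2j}+g,\infty)$; that is, up to the normalisation fixed earlier,
\[
J(a_{2j})=-\tfrac12 M(x_{2j}),\qquad 0\le j\le n.
\]
Substituting $x=-a_{2j}$, the formula for $\Upsilon_K$ above becomes $\max_{j}\bigl(xt+M(x_{2j})\bigr)=\max_{j}\bigl(xt-2J(-x)\bigr)$. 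It then remains to see that the supremum of $xt-2J(-x)$ over all real $x$ is realised among the corners $x=-a_{2j}$: for $t\in[0,2]$ this function is piecewise linear, and, using that $V_i$ is non--increasing with steps of size at most one together with the linear growth of $J$ towards $-\infty$ built into its definition, its only local maxima occur at those corners. Hence $\Upsilon_K(t)=\sup_{x\in\R}\bigl(xt-2J(-x)\bigr)$, the Legendre transform of $x\mapsto 2J(-x)$.

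The main obstacle is the $\tcfk$ computation together with the semigroup translation: one must pin down the $v$--powers and verify that the generator of $\thfk(K)$ really sits in grading $\max_j\gr(x_{2j})$ in the normalisation for which this quantity equals $\Upsilon_K(t)$, and then reconcile two bookkeeping systems — the Maslov and Alexander gradings of the staircase on one side, the gap--counting function $J$ attached to surgeries on the other — getting every index, every sign, and the reflection $x\mapsto -x$ exactly right. By contrast, the convexity argument that closes the Legendre transform in the last step is routine once the data are aligned.
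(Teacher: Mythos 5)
Your proposal is correct and follows essentially the same route as the paper: the formula $\Upsilon_K(t)=\max_j\bigl(M(x_{2j})-tA(x_{2j})\bigr)$ you extract from the staircase is exactly the Ozsv\'ath--Stipsicz--Szab\'o formula the paper cites (Proposition~\ref{prop:UofL}), your identification $M(x_{2j})=-2J(\alpha_{2j})$ is the content of Proposition~\ref{cor:upsilon0}, and your closing observation that the slopes of $x\mapsto tx-2J(-x)$ lie in $\{t,t-2\}$ for $t\in[0,2]$, so the supremum is attained at the corners $x=-\alpha_{2j}$, is the paper's steps (a)--(e) in Theorem~\ref{thm:Uastransform}. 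The only cosmetic difference is that you re-derive the OSS staircase formula rather than citing it.
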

\noindent Here and throughout, an $L$--space knot is a knot on which {\em positive} framed surgery yields an $L$--space.  The result extends to connected sums of $L$--space knots, yielding the following corollary:
\begin{cor}\label{cor:independence} Let $\mathcal{L}$ denote the subgroup of the smooth concordance group generated by $L$--space knots.  Suppose $\Upsilon_\alpha(t)\ne 0$ for some $\alpha\in \mathcal{L}$.  Then the $d$--invariants of surgeries can be used to show $\alpha\ne 0$.
\end{cor}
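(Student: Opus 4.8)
The plan is to trade the $\Upsilon$--obstruction for a comparison of the $d$--invariants of large surgeries, using additivity. Write $\alpha = P\#(-N)$, where $P$ and $N$ are the connected sums of $L$--space knots obtained by collecting, with multiplicity, the generators of $\alpha$ that occur with positive, respectively negative, sign. Since $\Upsilon$ is additive under connected sum and $\Upsilon_{-K}=-\Upsilon_K$, we have $\Upsilon_\alpha = \Upsilon_P - \Upsilon_N$, so the hypothesis $\Upsilon_\alpha\neq 0$ says exactly that $\Upsilon_P\neq\Upsilon_N$.

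Next I would invoke the extension of Theorem~\ref{thm:legendre} to connected sums of $L$--space knots, which writes $\Upsilon_P$ as the Legendre transform of $x\mapsto 2J_P(-x)$ and $\Upsilon_N$ as the Legendre transform of $x\mapsto 2J_N(-x)$, with $J_P$, $J_N$ the counting functions equivalent to the $d$--invariants of the large surgeries on $P$ and on $N$. The point is only that $\Upsilon_P$ is a function of $J_P$ and $\Upsilon_N$ a function of $J_N$ (the easy direction of the Legendre correspondence, which needs no convexity): thus $\Upsilon_P\neq\Upsilon_N$ forces $J_P\neq J_N$. Unwinding the stated equivalence between $J$ and surgery $d$--invariants, there are then an integer $p\gg 0$ and a spin$^c$ structure for which the $d$--invariants of $S^3_p(P)$ and $S^3_p(N)$ disagree.

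Finally I would appeal to the concordance invariance of the $d$--invariants of surgeries: if $P$ and $N$ were concordant, then surgering the concordance annulus in $S^3\times[0,1]$ with framing $p$ would present $S^3_p(P)$ and $S^3_p(N)$ as the two ends of a rational homology cobordism carrying compatible spin$^c$ structures, forcing the relevant $d$--invariants to agree. Since we have just exhibited surgeries whose $d$--invariants differ, $P$ is not concordant to $N$, i.e.\ $\alpha = P\#(-N)\neq 0$ in the smooth concordance group---and the only invariants used to detect this were $d$--invariants of surgeries, which is the content of the corollary.

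The main obstacle is the one step that is not purely formal: extending Theorem~\ref{thm:legendre} from a single $L$--space knot to a connected sum. For one $L$--space knot the proof rests on $\cfk^\infty$ being a staircase; for a connected sum it is only a staircase plus ``box'' summands, so one must check that the boxes are invisible both to $\Upsilon$ (additive, and in the relevant range governed by the staircase part) and to the $d$--invariants of large surgeries, and that the Legendre duality of the single--knot case is compatible with the infimal convolution that governs $\Upsilon$ under connected sum---so that the function dual to $\Upsilon_P$ really is the counting function read off from the large surgeries on $P$. The remaining ingredients---additivity of $\Upsilon$, the implication $\Upsilon_P\neq\Upsilon_N\Rightarrow J_P\neq J_N$, and the concordance invariance of surgery $d$--invariants with the correct bookkeeping of spin$^c$ structures---are standard once the extended theorem is in hand.
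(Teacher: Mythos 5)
Your proposal is correct and follows essentially the same route as the paper: decompose $\alpha$ into positive and negative connected sums of $L$--space knots, apply the connected-sum extension of the Legendre transform result (Theorem~\ref{thm:theoremforsums}, which the paper obtains from the infimum-convolution formula for $J$ and Lemma~\ref{lem:diamond}) to conclude that equal $J$--functions would force equal $\Upsilon$--functions, and take the contrapositive. The ``main obstacle'' you flag is precisely what the paper resolves by citing the formula $J = J_1\diamond\cdots\diamond J_n$ for connected sums together with additivity of $\Upsilon$.
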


The subgroup $\mathcal{L}$ is quite interesting.  In particular, it contains the subgroup $\mathcal{A}$ generated by algebraic knots, i.e. connected links of complex singularities, which lies at the crossroads of many interesting areas of mathematics \cite{Milnor,EN,KM,Rudolph,HKL,MicallefWhite}.  In this context, the $J$ function arises naturally as a counting function associated to the semigroup of the singularity defining an algebraic knot.  It is conjectured that $\mathcal{A}$, and $\mathcal{L}$ more generally, is freely generated \cite{Rudolph,HKL,Baker} and  it would be interesting to know whether the $d$--invariants provide strictly more information about this conjecture than $\Upsilon$ (it is known that neither can solve it, for example the two knots $T(2,13)\#T(2,3;2,15)$ and $T(2,15)\#T(2,3;2,13)$ have
the same $J$ functions and the same $\Upsilon$ functions, but in \cite{HKL} it is shown that they are not concordant).

In a related direction, one can compare criteria derived from $\Upsilon$ and $J$ for estimating the Gordian distance between knots.  For Gordian distance between $L$--space knots, the $d$--invariants do indeed contain more information.  

\begin{thm} \label{crossingstrength} Suppose $K_0$ and $K_1$ are connected sums of  $L$--space knots related by a sequence of crossing changes.  Then the crossing change inequality satisfied by their $\Upsilon$ functions   is determined (under the Legendre transform) by a crossing change inequality for their $J$ functions, but not conversely.
\end{thm}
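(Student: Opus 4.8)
The plan is to compare the two crossing change inequalities through the Legendre transform of Theorem~\ref{thm:legendre}: applying the transform to the crossing change inequality for the $d$--invariant data $J$ should reproduce exactly the crossing change inequality for $\Upsilon$ of Ozsv\'ath--Stipsicz--Szab\'o \cite{OSS}, whereas the transform cannot be run backwards since it remembers only the lower convex envelope of $J$. Throughout I write $f^\vee(t)=\sup_x\left(tx-f(x)\right)$ for the Legendre transform and $h_K(x)=2J_K(-x)$, so that $\Upsilon_K=h_K^\vee$, with $h_K$ extended to connected sums as the infimal convolution of the summands' functions (consistently, since $f^\vee+g^\vee=(f\,\square\,g)^\vee$ and $\Upsilon$ is additive under $\#$).

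First I would establish the crossing change inequality for $J$. If $K_-$ is obtained from $K_+$ by changing one positive crossing to a negative one, the large surgeries $S^3_N(K_\pm)$ are related by a cobordism built from blowing up and down an unknot linking the two strands of the crossing; running the large surgery formula for $d$--invariants through it gives
\[
V_s(K_-)\ \le\ V_s(K_+)\ \le\ V_{s-1}(K_-)\qquad\text{for all }s,
\]
equivalently $J_{K_-}(x)\le J_{K_+}(x)\le J_{K_-}(x-1)$. This holds for arbitrary knots, hence in particular for connected sums of $L$--space knots; iterating it along a sequence of crossing changes relating $K_0$ to $K_1$ --- with $m_+$ of them changing a positive crossing to a negative one and $m_-$ the reverse --- yields $J_{K_1}(x+m_-)\le J_{K_0}(x)\le J_{K_1}(x-m_+)$. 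Next I would transform. Since $f\mapsto f^\vee$ reverses order and turns an argument--shift into the addition of a linear function, and since the single crossing change inequality says $h_{K_-}\le h_{K_+}$ and $h_{K_+}(x)\le h_{K_-}(x+1)$, applying $f\mapsto f^\vee$ and invoking Theorem~\ref{thm:legendre} gives precisely
\[
\Upsilon_{K_-}(t)-t\ \le\ \Upsilon_{K_+}(t)\ \le\ \Upsilon_{K_-}(t),
\]
which, together with the symmetry $\Upsilon_K(t)=\Upsilon_K(2-t)$ (equivalently, the mirror form of the inequality for $J$), is the Ozsv\'ath--Stipsicz--Szab\'o bound; iterating reproduces their inequality relating $\Upsilon_{K_0}$ and $\Upsilon_{K_1}$. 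As $f\mapsto f^\vee$ restricts to a bijection between the $\Upsilon$--type functions and the convex envelopes of the $J$--type functions, this exhibits the $\Upsilon$ inequality as \emph{the} Legendre transform of the $J$ inequality, not merely a consequence.

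For the ``but not conversely'' I would argue that the transform destroys information: $h_K^{\vee\vee}$ is the lower convex envelope of $h_K$, and for $L$--space knots $J_K$ is generally a non-convex staircase, so inverting the Legendre transform of $\Upsilon_K$ returns only that envelope. Hence the Legendre transform of the $\Upsilon$ crossing change inequality is only the corresponding inequality between convex envelopes, which is strictly weaker than the inequality for $J$ as soon as one of $K_0,K_1$ has $J_K$ different from its envelope. Such knots lie in the class at hand: for instance $T(4,5)$ satisfies $(V_0,\dots,V_6)=(3,2,1,1,1,1,0)$, whose successive differences $(1,1,0,0,0,1)$ are not monotone, so $J_{T(4,5)}$ is strictly above its convex envelope at $s=3,4,5$. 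Taking $K_0=T(4,5)$ and $K_1$ any connected sum of $L$--space knots related to it by crossing changes --- the unknot will do --- the inequality for $J$ constrains those intermediate values while the one for $\Upsilon$ does not, which is the asserted failure of the converse.

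The hard part is the first step: one must obtain the crossing change inequality for $J$ \emph{with the unit shift in exactly that position}, since it is this shift which makes the Legendre transform produce the $-t$ term of the Ozsv\'ath--Stipsicz--Szab\'o bound rather than a coarser estimate; this calls for care with the identification of \Spinc\ structures across the crossing change cobordism, and with how $J$ behaves under connected sum. Given that input, the remainder is formal properties of the Legendre transform together with the explicit computation for $T(4,5)$.
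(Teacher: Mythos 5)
Your forward direction is essentially the paper's own: the crossing change inequality $J_{K_-}(m)\le J_{K_+}(m)\le J_{K_-}(m-1)$ for arbitrary knots, proved by a negative definite cobordism obtained from an unknot linking the crossing strands (Theorem~\ref{thm:crossingJ}), followed by order reversal and the shift rule for the Legendre transform to recover the Ozsv\'ath--Stipsicz--Szab\'o bound (Corollary~\ref{needtolabelthistoo}). Your bookkeeping of the unit shift is correct and matches the paper.

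The gap is in the ``but not conversely'' half. Your argument that the Legendre transform only retains the lower convex envelope of $2J(-x)$ shows that the \emph{function} $J$ is not recoverable from $\Upsilon$, but the theorem is a statement about the crossing change \emph{inequalities} as obstructions: one must exhibit actual connected sums of $L$--space knots and an actual number of crossing changes for which the $\Upsilon$ inequality is satisfied while the $J$ inequality is violated. Your proposed instance, $K_0=T(4,5)$ and $K_1$ the unknot, does not achieve this. For that pair $J_U(m)=\max(0,-m)$, and the right-hand inequality $J_U(m)\le J_{T(4,5)}(m-n)$ is vacuous (using $V_{-s}=V_s+s\ge s$), while the left-hand inequality $J_{T(4,5)}(m+p)\le J_U(m)$ is binding only at $m=0$, where it gives $p\ge 6$; one checks directly from $(V_0,\dots,V_6)=(3,2,1,1,1,1,0)$ that $p=6$, $n=0$ satisfies every instance of the $J$ inequality. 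Since $0\le \Upsilon_{T(4,5)}(t)+pt$ together with $|\Upsilon_{T(4,5)}(t)|\le 6t$ also gives exactly $p\ge 6$, the two obstructions coincide for this pair despite the non-convexity of $J_{T(4,5)}$ at $s=3,4,5$. The paper closes this half with a verified example of the required kind (Example~\ref{ex:4967}): taking $K_0=T(4,9)$ and $K_1=T(6,7)$ and asking whether three positive crossing changes suffice, one computes $0\le \Upsilon_{T(4,9)}(t)-\Upsilon_{T(6,7)}(t)\le 3t$, so $\Upsilon$ does not obstruct, whereas $J_{T(6,7)}(7)=3>2=J_{T(4,9)}(4)$ violates $J_{T(6,7)}(m+3)\le J_{T(4,9)}(m)$. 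You need a worked example of this sort; the abstract information-loss argument plus an unverified instance does not suffice.
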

\noindent See \eqref{eq:u2} and \eqref{eq:u1} below for the precise statement of the crossing change inequalities.  We highlight our interest in the above theorem by noting again that  algebraic knots, and  torus knots in particular, are $L$--space knots.   
The minimal unknotting sequences of torus knots have recently attracted a lot of  interest; see for example
\cite{Ba-un,Ba0,Fel15,OwSt,Wn}. The Gordian
distance between algebraic knots
is closely related to
studying adjacency of singularities; see \cite{bo-li2,Fel14}.

It is important to note that Theorem \ref{thm:legendre} does not extend to all knots.  Indeed, the Legendre transform of a real-valued function is always convex, whereas $\Upsilon(t)$ is typically not.  For instance, the mirror image of an $L$--space knot will have concave $\Upsilon(t)$ function, since  taking mirror images  changes the sign of $\Upsilon(t)$.  On the other hand, its $J$--function will be exactly the same as that of the unknot, with Legendre transform  identically zero.  It is then natural to ask in what capacity Theorem~\ref{thm:legendre} and its corollaries extend. 
\begin{question} For which knots is $\Upsilon(t)$ a convex function?  For which such knots is $\Upsilon(t)$ the Legendre transform of $2J(-x)$?
\end{question}

From a geometric perspective, a natural extension of the set of $L$--space knots are the  so-called {\em strongly quasipositive knots}, distinguished by the fact that they possess a minimal genus Seifert surface properly isotopic to a piece of an algebraic curve in the $4$--ball.  Fibered strongly quasipositive knots are detected by their knot Floer homology \cite{SQPfiber}, and it would be very interesting to know if $\Upsilon$ provides further information about this feature.   For instance:   

\begin{question}\label{question} Suppose $K$ is a strongly quasipositive knot or, more generally, a quasipositive knot.  Is $\Upsilon_K(t)$ convex?\end{question}
\begin{remark}
Peter Feller and David Krcatovich, and independently Jen Hom,  informed us of examples which indicate that the answer to the above question is no.  Feller and Krcatovich's examples come from closures of the family of $3$--braids $(\sigma_1\sigma_2^2\sigma_1)^n\sigma_1\sigma_2$, for $n\ge 3$, and Hom's example is the $(2,1)$ cable of the right-handed trefoil (see \cite{FK} for an explanation of the former examples and \cite{Hom,HedThesis,Petkova} for calculations of the knot Floer homology of the latter, from which $\Upsilon$ can be readily extracted).   
\end{remark}

Finally, it would be interesting to know if there is some generalization of Theorem \ref{thm:legendre} which  holds for all knots.  Such a generalization would likely incorporate the $d$--invariant counting function for negative-framed surgeries.

\begin{ack}  This note grew out of joint work with Chuck Livingston, and benefitted from his input.  We also thank Peter Feller and Marco Golla for interesting conversations, and Peter Feller, David Krcatovich, and Jen Hom for answering Question \ref{question}.
\end{ack}

\section{Review of the Legendre transform}
We give some necessary background on Legendre transform of functions in one variable.
We refer to \cite[Section 12]{Rock} for more details. 

\begin{definition}\label{def:fl}
Let $f\colon\R\to\R$ be a continuous function. The \emph{Legendre transform} of $f$ is a function $f^*\colon\R\to\R\cup\{\infty\}$ defined as
\[f^*(t)=\sup_{x\in\R} tx-f(x).\]
The \emph{domain} of $f^*$ is the set $D(f^*)=\{t\colon f^*(t)<\infty\}$.
\end{definition}

\goodbreak
\begin{remark}\
\begin{itemize}
\item[(a)] Although in many articles the Legendre transform is defined only for convex functions, Definition~\ref{def:fl} does not require
$f$ to be convex.
\item[(b)] The Legendre transform is also known as the \emph{Fenchel-Legendre transform} or the \emph{convex conjugate}.
\item[(c)] One can consider the \emph{concave conjugate} by replacing the supremum in Definition \ref{def:fl} with infimum.  This would likely be relevant to any generalization of the results of this note to arbitrary knots.
\end{itemize}
\end{remark}

\begin{lemma}\label{lem:isconvex}
The function $f^*$  is a convex function.
\end{lemma}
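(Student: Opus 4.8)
The plan is to show directly from the definition that $f^*$ satisfies the defining inequality of a convex function, namely that for any $t_0, t_1 \in \R$ and any $\lambda \in [0,1]$ we have
\[f^*(\lambda t_0 + (1-\lambda) t_1) \le \lambda f^*(t_0) + (1-\lambda) f^*(t_1).\]
The key observation is that $f^*$ is defined as a supremum over $x \in \R$ of the family of functions $t \mapsto tx - f(x)$, and each of these is \emph{affine} in $t$ (with slope $x$ and intercept $-f(x)$). A pointwise supremum of affine functions is convex, so the result is essentially structural.

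First I would fix $t_0, t_1$ and $\lambda$, and let $t = \lambda t_0 + (1-\lambda) t_1$. For an arbitrary $x \in \R$, I would write
\[tx - f(x) = \lambda\bigl(t_0 x - f(x)\bigr) + (1-\lambda)\bigl(t_1 x - f(x)\bigr) \le \lambda f^*(t_0) + (1-\lambda) f^*(t_1),\]
using the definition of $f^*(t_0)$ and $f^*(t_1)$ as suprema to bound each bracketed term. Since this holds for every $x$, taking the supremum over $x$ on the left gives $f^*(t) \le \lambda f^*(t_0) + (1-\lambda) f^*(t_1)$, which is exactly convexity. One should note that this argument is valid even when some values are $+\infty$: if either $f^*(t_0)$ or $f^*(t_1)$ is $+\infty$ the right-hand side is $+\infty$ and there is nothing to prove, and otherwise the chain of inequalities above makes sense with finite quantities. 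This also shows $D(f^*)$ is convex, though that is not needed for the statement.

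There is essentially no main obstacle here; the only point requiring a modicum of care is the handling of the value $+\infty$ in the codomain of $f^*$, which is dispatched by the case distinction above. I would present the proof in two or three sentences, emphasizing the affineness of $t \mapsto tx - f(x)$ as the conceptual reason, rather than dwelling on the arithmetic.
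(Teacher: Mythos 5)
Your proposal is correct and is essentially the paper's argument: the paper also observes that $f^*$ is a supremum of the functions $t\mapsto tx-f(x)$, each convex (indeed affine) in $t$, and that a supremum of convex functions is convex. You simply unwind that one-line fact into the explicit convexity inequality, with a careful (and harmless) aside about the value $+\infty$.
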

\begin{proof}
For fixed $x$, the function $t\mapsto tx-f(x)$ is a convex function. A supremum of a family of convex functions is convex.
\end{proof}
Notice that this implies that $f^*$ is a continuous function on $D(f^*)$. 
\begin{remark}
If $f$ is a strictly convex function, then $(f^{*})^*=f$; see \cite[Theorem 12.2]{Rock}. This is not always true, for example, if $f$ is not convex, 
then $(f^*)^*$ is a convex function, so cannot be equal to $f$.
\end{remark}

For any function $h\colon\R\to\R$ and a number $y$ we define the shifted function $T_yh\colon\R\to\R$  by the formula
\begin{equation}\label{eq:shiftoperator}
T_yh(x)=h(x+y).
\end{equation}
We have:  
\begin{equation}
(T_yh)^*(t)=h^*-yt\label{eq:tyh}.
\end{equation}
To prove this,  write
\[(T_yh)^*(t)=\sup_{x\in\R} tx-h(x+y)\stackrel{z=x+y}{=}\sup_{z\in\R} (z-y)t-h(z)=h^*(t)-yt.\]

We will also need the following easy result.
\begin{lemma}\label{lem:fenchelineq}
Suppose $f$ and $h$ are two continuous functions satisfying $f(x)\le h(x)$ for all $x\in\R$. Then $f^*(t)\ge h^*(t)$ for all $t\in\R$.
\end{lemma}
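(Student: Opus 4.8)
The plan is to unwind the definition of the Legendre transform and use monotonicity of the supremum. Fix $t\in\R$. For each fixed $x\in\R$, the hypothesis $f(x)\le h(x)$ gives $-f(x)\ge -h(x)$, and adding $tx$ to both sides yields the pointwise inequality $tx-f(x)\ge tx-h(x)$. This is the only place the assumption $f\le h$ enters.

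Next I would take the supremum over $x\in\R$ on both sides. Since $tx-f(x)\ge tx-h(x)$ holds for every $x$, in particular $\sup_{x\in\R}(tx-f(x))$ is an upper bound for the family $\{tx-h(x)\}_{x\in\R}$, hence dominates its least upper bound; that is,
\[
f^*(t)=\sup_{x\in\R}\bigl(tx-f(x)\bigr)\ \ge\ \sup_{x\in\R}\bigl(tx-h(x)\bigr)=h^*(t).
\]
Since $t$ was arbitrary, the inequality $f^*(t)\ge h^*(t)$ holds for all $t\in\R$, which is the claim. One should note that continuity of $f$ and $h$ plays no essential role in this argument beyond being the standing hypothesis under which $f^*$ and $h^*$ were defined in Definition~\ref{def:fl}; in particular the conclusion is valid as an inequality in $\R\cup\{\infty\}$, and forces $D(h^*)\subseteq D(f^*)$.

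There is no real obstacle here: the statement is purely formal and the entire content is the order-reversing nature of $f\mapsto -f$ together with monotonicity of $\sup$. The only thing to be slightly careful about is bookkeeping with the value $+\infty$ when passing to suprema, but the inequality above is interpreted in $\R\cup\{\infty\}$ with the usual conventions, so nothing goes wrong.
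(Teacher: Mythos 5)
Your argument is correct: the pointwise inequality $tx-f(x)\ge tx-h(x)$ followed by monotonicity of the supremum is exactly the standard proof, and the paper itself omits any proof of this lemma (calling it an ``easy result''), so this is precisely the argument the authors intend the reader to supply.
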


Suppose now that $f$ and $g$ are two functions bounded from below. Define the \emph{infimum convolution} as
\begin{equation}\label{eq:diamond} f\diamond g(m)=\inf_{i+j=m} f(i)+g(j).\end{equation}
The above definition works for functions on any group $G$. We will use it over $\R$ or $\Z$. The following fact
relates the convolution to the Legendre transform.
\begin{lemma}\label{lem:diamond}
For two functions $f$ and $g$ we have
\[(f\diamond g)^*(t)=f^*(t)+g^*(t),\]
for all $t$ such that both sides are defined.
\end{lemma}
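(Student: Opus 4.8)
The plan is to prove the identity $(f\diamond g)^*(t) = f^*(t) + g^*(t)$ by a direct computation unwinding the definitions, much in the spirit of the proof of \eqref{eq:tyh} above. Writing out the left-hand side,
\[
(f\diamond g)^*(t) = \sup_{m\in\R}\Bigl( tm - \inf_{i+j=m} \bigl(f(i)+g(j)\bigr)\Bigr)
= \sup_{m\in\R}\ \sup_{i+j=m}\Bigl( tm - f(i) - g(j)\Bigr),
\]
where in the second step I use that $-\inf_S h = \sup_S(-h)$ and that $tm$ does not depend on the inner index. The double supremum over $m$ and over decompositions $i+j=m$ collapses to an unconstrained supremum over the pair $(i,j)\in\R^2$, and since $m = i+j$ we may replace $tm$ by $ti + tj$. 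This gives
\[
(f\diamond g)^*(t) = \sup_{(i,j)\in\R^2}\Bigl( ti - f(i) + tj - g(j)\Bigr)
= \sup_{i\in\R}\bigl(ti - f(i)\bigr) + \sup_{j\in\R}\bigl(tj - g(j)\bigr)
= f^*(t) + g^*(t),
\]
where the separation of the supremum over the product into a sum of suprema is valid because the two summands depend on disjoint variables.

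The main thing to be careful about is the handling of infinities and the hypothesis that $f$ and $g$ are bounded below. Boundedness below of $f$ and $g$ guarantees that $f\diamond g$ is well-defined (the infimum in \eqref{eq:diamond} is over a set bounded below, hence is a genuine real number or $-\infty$ is avoided), so the outer Legendre transform makes sense. The statement only claims the identity "for all $t$ such that both sides are defined", i.e. on $D(f^*)\cap D(g^*)$; on that set each of $f^*(t)$ and $g^*(t)$ is finite, and the computation above shows the left side is finite too and equals their sum. Conversely one should note $D\bigl((f\diamond g)^*\bigr) = D(f^*)\cap D(g^*)$, which also follows from the displayed chain of equalities read as an identity in $\R\cup\{+\infty\}$, using that a sum of two extended-real suprema is $+\infty$ exactly when at least one is.

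I do not anticipate a serious obstacle here; the only subtlety is the bookkeeping of the nested suprema and infimum, and making sure the interchange $-\inf = \sup(-\,\cdot\,)$ and the splitting of $\sup$ over a product are applied correctly. These are routine facts about suprema that hold with no convexity or continuity assumptions, so the lemma follows purely formally once the definitions are substituted.
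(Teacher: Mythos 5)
Your proof is correct and follows essentially the same route as the paper's: unwind the definitions, convert $-\inf$ into $\sup(-\,\cdot\,)$, collapse the nested suprema over $m$ and the decompositions $i+j=m$ into an unconstrained supremum over pairs, and split it into the two Legendre transforms. The extra remarks on boundedness below and on the domains are sensible bookkeeping that the paper leaves implicit.
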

\begin{proof}
We have
\begin{multline*}
(f\diamond g)^*(t)=\sup_{x} tx-(f\diamond g)(x)=\sup_x tx-(\!\!\!\inf_{u+v=x} f(u)+g(v))=\\
=\sup_x\sup_{u+v=x} tx-f(u)-g(v)=\sup_u\sup_v tu+tv-f(u)-g(v)=f^*(t)+g^*(t).
\end{multline*}
\end{proof}

\section{The  $\Upsilon$ function for a knot $K$}
To a  knot $K$ in the $3$-sphere, knot Floer homology associates  a $\Z\oplus\Z$-- filtered, $\Z$--graded complex over $\mathbb{Z}_2$, denoted $\cfk^\infty(K)$, well-defined up to $\Z\oplus\Z$--filtered chain homotopy equivalence \cite{OSknots,Rasmussen}.  It is a module over $\Z_2[U,U^{-1}]$,
where $U$ is a formal variable whose action lowers the grading by $2$ and the filtration by $(1,1)$.  In \cite{OSS} (see also \cite{Liv}), this complex was used to define a function $\Upsilon_K\colon[0,2]\to\R$ associated to $K$. It is a generalization of the $\tau$ invariant, in the sense that
$\Upsilon'(0)=-\tau$.  Here we summarize its main properties

\begin{theorem}[see \expandafter{\cite[Proposition 1.8, Proposition 1.10, Theorem 1.11]{OSS}}]\label{thm:ossmain}
For $t\in[0,2]$, $\Upsilon(t)$ is a continuous, piecewise linear, function with the following properties: 
\begin{itemize}
\item[(a)] {\em (Symmetry)} $\Upsilon(t)=\Upsilon(2-t)$.
\item[(b)] {\em (Additivity)} If $K_1\#K_2$ denotes the connected sum of   knots $K_1$ and $K_2$, then $$\Upsilon_{K_1\# K_2}=\Upsilon_{K_1}+\Upsilon_{K_2}.$$
\item[(c)] {\em(Crossing Change Inequality) }If $K_-$ is obtained from $K_+$ by changing a positive crossing,
then \begin{equation*}\Upsilon_{K_+}(t)\le \Upsilon_{K_-}(t)\le \Upsilon_{K_+}(t)+t.\end{equation*}
\item[(d)]{\em (Slice genus bound) }If $g_s$ denotes the smooth slice genus, then for any $t$ we have $$|\Upsilon_K(t)|\le tg_s(K).$$\item[(e)] {\em(Mirror Reversal)} If $-K$ denotes the mirror image of $K$, with string orientation reversed, then $$\Upsilon_{-K}=-\Upsilon_{K}.$$
\end{itemize}
\end{theorem}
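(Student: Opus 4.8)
Since this statement is exactly \cite[Proposition 1.8, Proposition 1.10, Theorem 1.11]{OSS}, the proposal is to recall the structure of that argument rather than reprove it from scratch; everything flows from the definition of $\Upsilon$ as an optimal filtration level on $\cfk^\infty(K)$. Recall that for $t\in[0,2]$ one interpolates between the two natural $\Z$--filtrations on $\cfk^\infty(K)$, the ``$i$--filtration'' and the Alexander (``$j$--'') filtration, by the $\R$--filtration $\mathcal{F}^t$ assigning to a generator $[x,i,j]$ the value $(1-\tfrac t2)\,i+\tfrac t2\,j$, and that $\Upsilon_K(t)$ equals $-2$ times the minimal $\mathcal{F}^t$--level of a cycle in $\cfk^\infty(K)$ generating the distinguished $\Z_2[U,U^{-1}]$--tower. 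Continuity and piecewise linearity in $t$ are then formal: for any fixed finite model of $\cfk^\infty(K)$ the minimal level is an infimum over finitely many affine functions of $t$. Property (a) is immediate once one observes that $\mathcal{F}^{2-t}$ is precisely $\mathcal{F}^{t}$ with the roles of $i$ and $j$ interchanged; the standard symmetry making $\cfk^\infty(K)$ filtered chain homotopy equivalent to itself after swapping the two filtrations (the chain-level incarnation of $\hfk(K,s)\cong\hfk(K,-s)$) then forces the optimal levels at $t$ and $2-t$ to agree. Property (e) is the analogous consequence of the fact that $\cfk^\infty(-K)$ is the dual complex of $\cfk^\infty(K)$, under which $\mathcal{F}^t$--levels are negated, so $\Upsilon$ changes sign.

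For (b), the plan is to use the filtered K\"unneth isomorphism $\cfk^\infty(K_1\#K_2)\simeq\cfk^\infty(K_1)\otimes_{\Z_2[U,U^{-1}]}\cfk^\infty(K_2)$, under which the $\mathcal{F}^t$--filtration on the tensor product is the sum of the two $\mathcal{F}^t$--filtrations. Tensoring together optimal cycles for $K_1$ and $K_2$ then gives $\Upsilon_{K_1\#K_2}\le\Upsilon_{K_1}+\Upsilon_{K_2}$; the reverse inequality requires checking that no ``mixed'' cycle in the tensor product does better than the split ones, which is a leading--term argument on the bifiltration. This is, incidentally, the chain-level shadow of the bookkeeping in Lemma~\ref{lem:diamond}: additivity under $\#$ corresponds to additivity of the relevant counting function under the infimum convolution $\diamond$, whose Legendre transform splits as a sum.

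The remaining two properties pass from algebra to geometry. A single crossing change $K_+\rightsquigarrow K_-$ is realized by $(\pm1)$--surgery on an unknot $c$ bounding a disk that meets $K$ in two points; running this through the associated surgery exact triangle produces filtered chain maps $\cfk^\infty(K_+)\to\cfk^\infty(K_-)$ and $\cfk^\infty(K_-)\to\cfk^\infty(K_+)$ whose $\mathcal{F}^t$--shifts are bounded by $0$ and by $t$ respectively, and monotonicity of the optimal level under such maps gives $\Upsilon_{K_+}(t)\le\Upsilon_{K_-}(t)\le\Upsilon_{K_+}(t)+t$, which is (c). For (d) one argues in the same spirit with a relative cobordism: a genus--$g$ surface in $B^4$ from the unknot to $K$ induces a map on the $\Z_2[U,U^{-1}]$--towers whose $\mathcal{F}^t$--shift is controlled linearly by $g$ with effective slope $t$, and applying this also to the reversed cobordism forces $|\Upsilon_K(t)|\le t\,g_s(K)$.

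The step I expect to be the real obstacle is (c): pinning down the maps induced by the crossing-change surgery on $\cfk^\infty$, and in particular extracting the \emph{sharp} shifts $0$ and $t$ rather than something weaker, requires care with the surgery formula and with tracking where the relevant homology classes sit in the bifiltration. By comparison, (a), (e), the easy half of (b), and the continuity/piecewise-linearity assertions are essentially formal once the definition of $\Upsilon$ and the standard symmetry, duality, and K\"unneth properties of $\cfk^\infty$ are in hand.
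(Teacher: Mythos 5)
The paper does not prove Theorem~\ref{thm:ossmain} at all: it is imported wholesale from \cite{OSS} (see also \cite{Liv}), so the only thing to measure your write-up against is the cited literature, not an internal argument. Read that way, your outline is essentially the standard one: Livingston's reformulation of $\Upsilon_K(t)$ as $-2$ times the optimal $t$-interpolated filtration level of a cycle generating the $\Z_2[U,U^{-1}]$-tower in $\cfk^\infty(K)$, from which continuity, piecewise linearity, (a), and (e) are formal consequences of finiteness, the $i\leftrightarrow j$ symmetry, and duality of the complex, while (b) and (d) come from the K\"unneth theorem and cobordism maps. Two caveats. First, for the reverse inequality in (b), the clean argument is not a ``leading-term'' analysis of mixed cycles in the tensor product (which is awkward to make rigorous) but the formal trick of applying subadditivity to $(K_1\# K_2)\#(-K_2)$ together with (d) and (e); this is the same group-theoretic mechanism behind the paper's own remark that (e) is implied by (b) and (d). Second, your instinct that (c) is the delicate step is right, but the published proofs obtain the sharp one-sided shifts $0$ and $t$ from explicit filtered chain maps associated to the band/saddle moves realizing the crossing change, not from the surgery exact triangle for the $(\pm1)$-framed unknot linking the crossing; a triangle-based argument would need additional work to recover those sharp constants rather than the weaker symmetric bound $|\Upsilon_{K_+}(t)-\Upsilon_{K_-}(t)|\le t$. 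Neither point is a defect of the paper, which treats the theorem as a black box, but both would need attention if you actually wanted to reprove it.
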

\noindent Note that (b) and (d) together imply that  $\Upsilon$ descends to a homomorphism from the smooth concordance group to the additive group of continuous real valued functions on the interval $[0,2]$.  Also note that (e) is implied by (b) and (d), since $-K$ is the concordance inverse of $K$.

\section{The $J$--function for an $L$--space knot}

Suppose $K$ is an $L$--space knot.
By \cite{os-lspace} the Alexander polynomial of $K$ is of the following form:

\begin{equation}\label{eq:deltaLspace}
\Delta_K(t)=\sum_{k=0}^{2n} (-1)^k t^{\alpha_k},
\end{equation}
for some decreasing sequence of integers $\alpha_0,\ldots,\alpha_{2n}$, where $\alpha_0=-\alpha_{2n}=g$ is the genus of $K$. 
Moreover, for an $L$--space knot the Alexander polynomial determines  $\cfk^\infty(K)$ complex which, in turn,
 determines the $d$--invariants of surgeries on $K$ \cite{OSknots,NiWu}.  This procedure is described in detail in \cite{bo-li}. Namely, write \eqref{eq:deltaLspace} in the following
form:
\[\Delta_K(t)=t^{-g}\left(1+(t-1)\left(t^{\beta_1}+t^{\beta_2}+\ldots+t^{\beta_{s}}\right)\right).\]
The numbers $\beta_1,\ldots,\beta_{s}$ are positive integers, which can be expressed in terms of the $\alpha$ coefficients. Consider the set
\[\mathcal{G}=\Z_{<0}\cup\{\beta_1,\ldots,\beta_{s}\}\]
and define
\[I(m)=\#\{x\in\Z\colon x\ge m,\ x\in\mathcal{G}\}.\]

\noindent We call $I(m)$  the \emph{gap function} for the knot $K$.  If $K$ is an algebraic knot, then $\Z\setminus\mathcal{G}$ is the semigroup of the corresponding singular point; see \cite[Chapter 4]{Wa} for
details. This motivates the terminology: $\mathcal{G}\setminus \Z_{<0}$ is the set of `gaps' in the semigroup of the singularity i.e. the elements of $\Z_{\ge 0}$ not included in the semigroup.  The gap function counts the number of such elements greater than or equal to a fixed integer.

\begin{example}\label{example:gaps}
The torus knot $T(6,7)$ is the link of the singularity at the origin of the curve $z^6+w^7=0$, which has semigroup generated by $6$ and $7$.   The corresponding gap set is
\[\mathcal{G}_{6,7}=\Z_{<0}\cup \{1,2,3,4,5,8,9,10,11,15,16,17,22,23,29\}.\]
Some sample values of the gap function are given below:
\smallskip
\begin{center}  \begin{tabular} {| c || c | c | c | c| c |c | c | c |c|c|c|c|c|c|c|}
   \hline
   $m$ &   $\ge 30$ & $29$ & $28$ & $23$ & $22$ &$21$ & $17$ & $16$ & $15$ &  $1$& $0$ & $-1$ & $-2$ & ...\\ \hline
    $I_{6,7}(m)$   & $0$ & $1$ & $2$ & $3$ & $3$ &$4$ & $4$ & $5$ &\ $6$ & $15$ & $15$ & $16$ & $17$ & ...\\ \hline
    
  \end{tabular}
\end{center}

\smallskip
Similarly,  $T(4,9)$ is the link of a singularity with semigroup  generated by $4$ and $9$, whose gap set and gap function are as follows
\[\mathcal{G}_{4,9}=\Z_{<0}\cup\{1,2,3,5,6,7,10,11,14,15,19,23\}\]

\smallskip
\begin{center}  \begin{tabular} {| c || c | c | c | c | c | c |c|c|c|c|c|c|c|c|c|c|}
   \hline
   $m$ & $ \ge 24$ & $23$  & $19$ & $18$ & $16$ &$15$ & $14$ & $11$ & $10$ &$1$& $0$ & $-1$ & $-2$ & ... \\ \hline
    $I_{4,9}(m)$ & $0$ & $1$& $2$ & $2$ & $2$ &$3$ & $4$ & $5$ & $6$& $12$ & $12$ & $13$ & $14$ & ...\\ \hline
    
  \end{tabular}
\end{center}

\end{example}

\medskip

The gap function is quite natural from the point of view of singularity theory.  From the perspective of Heegaard Floer theory, however, it is more natural to consider the shifted gap function, which for an $L$--space knot we define as follows: 
\[J(m)=I(m+g)\]
Motivation for the shift is provided by the the following synthesis of several results of Ozsv\'ath and Szab\'o \cite{os-lspace,OSknots}.

\begin{proposition}[see \expandafter{\cite[Proposition 4.4]{bo-li}}]\label{prop:dandJ}
Let $K$ be an $L$--space knot, and let $q\ge 2g(K)-1$. Then for a particular enumeration of \Spc{} structures $\spinc_m$  by elements $m\in[-q/2,q/2)\cap \Z$, the  $d$--invariants of $q$--surgery on $K$ are given by:
\begin{equation}\label{eq:Jdef} d(S^3_q(K),\mathfrak{s}_m)=\frac{(q-2m)^2-q}{4q}-2J(m).\end{equation}
\end{proposition}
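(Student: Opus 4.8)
The plan is to run the standard pipeline for $d$--invariants of large surgeries --- the large surgery formula of Ozsv\'ath--Szab\'o --- and then to identify the resulting local $U$--exponents with the gap counts $J(m)$, using that $\cfk^\infty$ of an $L$--space knot is a \emph{staircase} complex governed by $\Delta_K$.

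\textbf{Step 1 (reduction to the local invariants $V_m$).} For $q\ge 2g(K)-1$, the large surgery theorem \cite{OSknots} (see also \cite{NiWu}) gives, for a canonical enumeration of \Spc{} structures by $m\in[-q/2,q/2)\cap\Z$, a $\Q$--graded isomorphism $HF^+(S^3_q(K),\spinc_m)\cong H_*(A^+_m)$ where $A^+_m=C\{\max(i,j-m)\ge 0\}\subseteq\cfk^\infty(K)$, with overall grading shift determined by the case $K=U$, i.e.\ by the $d$--invariant of the lens space $S^3_q(U)$ --- the quadratic term $\tfrac{(q-2m)^2-q}{4q}$. Since $K$ is an $L$--space knot, $S^3_q(K)$ is an $L$--space, so $H_*(A^+_m)\cong\mathcal{T}^+$ is a single tower; writing the vertical projection $v^+_m\co A^+_m\to C\{i\ge 0\}\simeq CF^+(S^3)$ as multiplication by $U^{V_m}$ on the bottom of that tower defines $V_m\ge 0$, and a grading count gives $d(S^3_q(K),\spinc_m)=\tfrac{(q-2m)^2-q}{4q}-2V_m$. (In the large--surgery range no ``$\max(V_m,H_m)$'' correction arises --- that is a feature of small surgery coefficients --- so for $L$--space knots the formula is genuinely clean; one must only be sure the enumeration is the one realizing the shift $\tfrac{(q-2m)^2-q}{4q}$, but tracking that is routine.)

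\textbf{Step 2 (identifying $V_m=J(m)$).} By \eqref{eq:deltaLspace} and \cite{os-lspace}, $\widehat{HFK}(K,s)=\F$ exactly for $s\in\{\alpha_0>\cdots>\alpha_{2n}\}$, which forces $\cfk^\infty(K)$ to be filtered chain homotopy equivalent to the staircase complex on generators $z_0,\dots,z_{2n}$ with $\partial z_{2\ell+1}=z_{2\ell}+z_{2\ell+2}$ (up to $U$--powers) whose step lengths are the consecutive differences $\alpha_{k-1}-\alpha_k$. For such a complex $V_m$ is read directly off the picture: it is the number of staircase corners lying above the diagonal $\{j=i+m\}$. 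The bridge to the semigroup is the normalization $\Delta_K(t)=t^{-g}\bigl(1+(t-1)(t^{\beta_1}+\cdots+t^{\beta_s})\bigr)$ recalled before the proposition: the $\beta_i$ are at once the abscissae (relative to the shift $\alpha_0=g$) of the staircase corners and, by definition, the elements of $\mathcal{G}\cap\Z_{\ge 0}$; hence a corner sits above $\{j=i+m\}$ iff its $\beta_i\ge m+g$, and therefore $V_m=\#\{x\in\mathcal{G}:x\ge m+g\}=I(m+g)=J(m)$. A cleaner alternative for this step is to first establish $V_m=t_{|m|}(K)+\max(0,-m)$, where $t_j(K)=\sum_{\ell>0}\ell\,a_{j+\ell}$ are the torsion coefficients of $\Delta_K$ (valid because the staircase is ``thin''), and then verify the elementary identity $t_j(K)=I(j+g)$ by a generating--function manipulation that exploits the semigroup symmetry $x\in S\Leftrightarrow 2g-1-x\notin S$. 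Substituting $V_m=J(m)$ into the display of Step 1 yields \eqref{eq:Jdef}.

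The idea is forced; the main obstacle is the bookkeeping in Step 2. One must locate the $(i,j)$--coordinates of each staircase generator as partial sums of the $\alpha$--differences, confirm that $v^+_m$ acts by exactly $U^{V_m}$ with $V_m$ equal to the claimed count for \emph{all} $m$ in the range --- in particular for negative $m$, where $A^+_m$ is governed by the condition $i\ge 0$ and the coordinate shift is what produces the relation $J(-m)=J(m)+m$ --- and check that this count is $\#\{x\in\mathcal{G}:x\ge m+g\}$ and not an off--by--one variant. A secondary, purely bookkeeping, point (flagged in Step 1) is to fix an enumeration of \Spc{} structures consistent with conjugation symmetry and with both appearances of the grading shift.
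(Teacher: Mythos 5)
The paper offers no proof of this proposition: it is imported wholesale from \cite[Proposition 4.4]{bo-li} (itself a synthesis of results of Ozsv\'ath--Szab\'o and Ni--Wu), so there is no in-paper argument to compare against. Your outline is the standard derivation and is essentially the one carried out in that reference: the large surgery formula reduces everything to $d(S^3_q(K),\mathfrak{s}_m)=\frac{(q-2m)^2-q}{4q}-2V_m$ (your observation that the $\max(V_m,H_{m-q})$ correction disappears is right, since $H_{m-q}=V_{q-m}=0$ once $q-m\ge g$), and the remaining content is the identification $V_m=I(m+g)$ for staircase complexes. One correction to Step 2, though: the $\beta_i$ are \emph{not} the abscissae of the staircase corners. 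There are $s=g$ of them (one per gap of the semigroup), whereas the staircase has only $n+1$ corners, and these numbers differ already for $T(2,5)$; the $\beta_i$ instead index the unit \emph{vertical steps} of the staircase, and $V_m$ counts vertical steps, not corners, crossing the relevant diagonal. Relatedly, a literal ``number of corners above $j=i+m$'' is bounded by $n+1$ and so cannot equal $V_m$ for $m\ll 0$, where $V_m$ grows linearly; it is exactly the $\Z_{<0}$ part of $\mathcal{G}$ that accounts for this, as your formula $V_m=\#\{x\in\mathcal{G}\colon x\ge m+g\}$ correctly records. Your alternative route---$V_j=t_j(K)$ for $j\ge 0$ from the lens-space surgery formula, the symmetry $V_{-j}=V_j+j$, and the elementary identity $t_j=I(j+g)$---is clean, avoids this pitfall entirely, and is the version I would recommend writing up.
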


\noindent  While the definition of $J$ above makes sense only for an $L$--space knot, the proposition motivates the following extension to arbitrary knots.

\begin{definition}\label{def:Jgeneral} For a general knot $K$,  define  $J_K(m)$  to be the unique integer making Equation \eqref{eq:Jdef} true. \end{definition}
\begin{remark}With the  above definition, $J(m)$ is easily identified with the function $V_m$ from \cite[Section 2.2]{NiWu}.  
%The quantity $V_m$ can, in turn, be identified with the rank of the cokernel of the map on homology induced by the inclusion of the $m$-th filtered subcomplex of $CFK^-(K)$ into the total complex.  
\end{remark}

Turning back to $L$--space knots, the following describes their  $\Upsilon$ functions.
\begin{proposition}[\expandafter{\cite[Theorem 1.15]{OSS}}]\label{prop:UofL}Let $K$ be an $L$--space knot, and $\alpha_i$ as in \eqref{eq:deltaLspace}. Define the sequence $m_k$ inductively by
\begin{align*}
m_0&=0\\
m_{2j}&=m_{2j-1}-1\\
m_{2j+1}&=m_{2j}-2(\alpha_{2j}-\alpha_{2j+1})+1.
\end{align*}
Then
\[\Upsilon_K(t)=\max_{0\le i\le n} m_{2i}-t\alpha_{2i}.\]
\end{proposition}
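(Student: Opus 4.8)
The plan is to compute $\Upsilon_K(t)$ directly from a staircase model of $\cfk^\infty(K)$ together with the $t$--modified homology of Ozsv\'ath--Stipsicz--Szab\'o. First I would recall that, because $K$ is an $L$--space knot, $\cfk^\infty(K)$ is filtered chain homotopy equivalent to a \emph{staircase complex} $\St$ determined by $\Delta_K$ (see \cite{OSknots,os-lspace,bo-li}): it is freely generated over $\F[U,U^{-1}]$ by $x_0,\dots,x_{2n}$ with $x_k$ in Alexander grading $\alpha_k$, and has differential $\partial x_{2j}=0$ and $\partial x_{2j+1}=U^{a_j}x_{2j}+x_{2j+2}$, where $a_j=\alpha_{2j}-\alpha_{2j+1}$ and $b_j=\alpha_{2j+1}-\alpha_{2j+2}$ are the lengths of the horizontal and vertical arrows. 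Since $x_0$ sits in Maslov grading $0$, the horizontal arrow forces $M(x_{2j})-M(x_{2j+1})=2a_j-1$ and the vertical arrow forces $M(x_{2j+1})-M(x_{2j+2})=1$; comparing these with the recursion in the statement shows that $x_k$ has Maslov grading exactly $m_k$.

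Next I would bring in the $t$--modified complex $\tcfk^-(K)$ from \cite{OSS}: a complex over $\F[v]$ equipped with a real grading $\gr$, whose homology $\thfk^-(K)$ has a rank--one free $\F[v]$--summand, and for which $\Upsilon_K(t)$ is (up to the standard normalization) the $\gr$--grading of a generator of that summand. In the normalization where a basic generator in Alexander grading $A$ and Maslov grading $M$ has $\gr=M-tA$ and $\gr(v)=-1$, the $t$--modified differential on the staircase is forced to be
\[\partial_t x_{2j+1}=v^{(2-t)a_j}x_{2j}+v^{tb_j}x_{2j+2},\]
the exponents being precisely those that make $\partial_t$ homogeneous of $\gr$--degree $-1$ (a one--line computation from the recursion for $m_k$); both are $\ge 0$ for $t\in[0,2]$. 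In particular $\gr(x_{2i})=m_{2i}-t\alpha_{2i}$.

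The heart of the argument is the homology computation, where the structure is rigid enough to pin down the answer. The map $\partial_t$ is injective on the span of the odd generators -- kill the coefficient of $x_0$, then of $x_2$, and so on down the staircase -- so $\thfk^-(K)$ is the cokernel of $\partial_t\colon\F[v]^{n}\to\F[v]^{n+1}$, a rank--one $\F[v]$--module presented by the classes $[x_{2i}]$ subject to $v^{(2-t)a_j}[x_{2j}]=v^{tb_j}[x_{2j+2}]$. The same triangular cascade shows that no $v^c x_{2i}$ is a boundary, so every $[x_{2i}]$ is a nonzero, non--torsion, homogeneous element; writing $\xi$ for a generator of the free summand we get $[x_{2i}]=v^{c_i}\xi$ with $c_i\ge0$, whence $\gr(\xi)\ge\max_i\gr(x_{2i})$. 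Conversely the cascade shows every cycle -- in particular $\xi$ -- is an $\F[v]$--combination of the \emph{even} generators only, so $\xi=\sum_i f_i[x_{2i}]=\big(\sum_i f_i v^{c_i}\big)\xi$ forces $\sum_i f_i v^{c_i}=1$, which is impossible unless $c_i=0$ for some $i$. Hence $\gr(\xi)=\max_i\gr(x_{2i})=\max_{0\le i\le n}(m_{2i}-t\alpha_{2i})$, which is the asserted formula. As a sanity check against Theorem~\ref{thm:ossmain}: $m_0=0=\max_k m_k$ gives $\Upsilon_K(0)=0$; near $t=0$ the maximum sits at $i=0$, so $\Upsilon_K'(0)=-\alpha_0=-g=-\tau$; and $m_{2n}=-2g$ gives $\Upsilon_K(2)=0$, consistent with $\Upsilon_K(t)=\Upsilon_K(2-t)$.

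The main obstacle is bookkeeping rather than conceptual: extracting the exact form of $\gr$ and of $\partial_t$ from \cite{OSS}, and coping with the fact that $(2-t)a_j$ and $tb_j$ are in general irrational, so that one must work over $\F[v^{1/N}]$ for a suitable $N=N(t)$ (or formally allow real exponents) -- neither point affects the triangular homology argument.
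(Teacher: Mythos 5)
The paper offers no proof of this proposition: it is imported verbatim as \cite[Theorem 1.15]{OSS}, so there is nothing internal to compare against. Your argument --- realizing $\cfk^\infty(K)$ as the staircase complex, reading off $M(x_k)=m_k$ from $M(x_0)=0$ and the arrow lengths, and then computing the $\gr$--grading of the generator of the free summand of $\thfk^-$ by the triangular/cascade analysis of the bidiagonal presentation matrix --- is correct and is essentially the proof given in the cited source. The only imprecision is cosmetic: since the cokernel can have $v$--torsion (e.g.\ $e_0+e_1$ when the two exponents agree), one should write $[x_{2i}]=v^{c_i}\xi$ modulo torsion, i.e.\ work with the image of $[x_{2i}]$ in the free quotient; your two inequalities for $\gr(\xi)$ go through verbatim under that reading.
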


\noindent Proposition~\ref{prop:UofL} can be reformulated in the following way.
\begin{proposition}\label{cor:upsilon0}
For an $L$--space knot $K$,  the $\Upsilon$ function is given by
\begin{equation}\label{eq:newupsilon}
\Upsilon_K(t)=-\min_{0\le i\le n} t\alpha_{2i}+2J(\alpha_{2i}).
\end{equation}
\end{proposition}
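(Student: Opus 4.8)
The plan is to observe that Proposition~\ref{cor:upsilon0} is just a relabelling of Proposition~\ref{prop:UofL}, once one recognizes the auxiliary integers $m_{2i}$ appearing there. Precisely, I would prove that
\[ m_{2i}\;=\;-2J(\alpha_{2i})\qquad\text{for every }0\le i\le n. \]
Granting this, Proposition~\ref{prop:UofL} immediately yields
\[ \Upsilon_K(t)=\max_{0\le i\le n}\bigl(m_{2i}-t\alpha_{2i}\bigr)=\max_{0\le i\le n}\bigl(-2J(\alpha_{2i})-t\alpha_{2i}\bigr)=-\min_{0\le i\le n}\bigl(t\alpha_{2i}+2J(\alpha_{2i})\bigr), \]
which is exactly \eqref{eq:newupsilon}. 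So the whole content is the displayed identity, which I would establish by induction on $i$.

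For the base case $i=0$ we have $\alpha_0=g$, hence $J(\alpha_0)=I(2g)=\#\{x\in\mathcal{G}\colon x\ge 2g\}$; every element of $\mathcal{G}$ is either negative or one of the $\beta_j$, and the top-degree term of $1+(t-1)(t^{\beta_1}+\cdots+t^{\beta_s})=t^{g}\Delta_K(t)$ is $t^{2g}$, forcing $\max_j\beta_j=2g-1$. Thus $m_0=0=-2J(\alpha_0)$. For the inductive step, the defining recursion for $m$ gives $m_{2i+2}=m_{2i+1}-1=m_{2i}-2(\alpha_{2i}-\alpha_{2i+1})$, so (using $J(m)=I(m+g)$) it is enough to prove
\[ I(\alpha_{2i+2}+g)-I(\alpha_{2i}+g)=\alpha_{2i}-\alpha_{2i+1}. \]

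The combinatorial heart of the matter is an exact description of the gap set. Expanding with the finite geometric series, and using $\Delta_K(1)=1$ together with $\alpha_{2n}=-g$ (so the $-1$ cancels the $k=2n$ term), one computes
\[ t^{\beta_1}+\cdots+t^{\beta_s}=\frac{t^{g}\Delta_K(t)-1}{t-1}=\sum_{j=0}^{n-1}\frac{t^{\alpha_{2j}+g}-t^{\alpha_{2j+1}+g}}{t-1}=\sum_{j=0}^{n-1}\ \sum_{p=\alpha_{2j+1}+g}^{\alpha_{2j}+g-1}t^{p}. \]
Since $\alpha_0>\alpha_1>\cdots>\alpha_{2n}$ is \emph{strictly} decreasing, the integer intervals $[\alpha_{2j+1}+g,\ \alpha_{2j}+g-1]$, $j=0,\dots,n-1$, are nonempty and pairwise disjoint; hence the $\beta_j$ are automatically distinct and $\{\beta_1,\dots,\beta_s\}$ equals their union. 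Now $I(\alpha_{2i+2}+g)-I(\alpha_{2i}+g)=\#\{x\in\mathcal{G}\colon \alpha_{2i+2}+g\le x<\alpha_{2i}+g\}$; because $\alpha_{2i+2}+g\ge\alpha_{2n}+g=0$, no negative elements of $\mathcal{G}$ contribute, and among the intervals above only the $j=i$ one, namely $[\alpha_{2i+1}+g,\ \alpha_{2i}+g-1]$, meets the window $[\alpha_{2i+2}+g,\ \alpha_{2i}+g-1]$ (in fact it lies inside it), while the intervals with $j\ge i+1$ sit entirely below and those with $j\le i-1$ entirely above, again by strict monotonicity. Counting gives $(\alpha_{2i}+g-1)-(\alpha_{2i+1}+g)+1=\alpha_{2i}-\alpha_{2i+1}$, completing the induction.

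The step I expect to require the most care — the main obstacle — is this last paragraph: verifying the disjointness of the intervals $[\alpha_{2j+1}+g,\alpha_{2j}+g-1]$ and pinning down precisely which of them meets the window $[\alpha_{2i+2}+g,\alpha_{2i}+g)$, including the extreme cases $j=0$, $j=n-1$ and the possibility that some interval degenerates to a single point (when $\alpha_{2j}=\alpha_{2j+1}+1$). Each of these follows quickly from $\alpha_0>\alpha_1>\cdots>\alpha_{2n}$, but they should be checked rather than asserted; the remainder of the argument is purely formal.
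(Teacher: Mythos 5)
Your proposal is correct and follows essentially the same route as the paper: both reduce the statement to the identity $m_{2i}=-2J(\alpha_{2i})$ and establish it by induction using the fact that the gap set, shifted by $g$, is the disjoint union of the integer intervals $[\alpha_{2j+1}+g,\alpha_{2j}+g-1]$. The only difference is one of detail: the paper asserts the step behavior of $J$ on these intervals as following from the definition, whereas you derive it explicitly from the geometric-series expansion of $(t^{g}\Delta_K(t)-1)/(t-1)$ — a worthwhile verification, but not a different argument.
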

\begin{proof}
It is enough to show that $m_{2i}=-2J(\alpha_{2i})$. To see this we write
\[m_{2i}=2(\alpha_1-\alpha_0)+2(\alpha_3-\alpha_2)+\dots+2(\alpha_{2i-1}-\alpha_{2i-2}).\]

On the other hand, by the definition of $J$, the difference $J(k+1)-J(k)$ is equal to $0$ if $k\in [\alpha_{2j},\alpha_{2j-1})$ for some $j$;
and is equal $-1$ if $k\in[\alpha_{2j+1},\alpha_{2j})$ for some $j$. In particular $J(\alpha_{2j-1})-J(\alpha_{2j})=0$ and
$J(\alpha_{2j})-J(\alpha_{2j+1})=\alpha_{2j+1}-\alpha_{2j}$. Moreover $J(\alpha_0)=0$ by the definition. Therefore an
easy induction yields:
\[-J(\alpha_{2j})=(\alpha_1-\alpha_0)+(\alpha_3-\alpha_2)+\ldots+(\alpha_{2j-1}-\alpha_{2j-2}).\]
\end{proof}

We can rephrase Proposition~\ref{cor:upsilon0} in yet another manner.
Extend $J$ to a piecewise linear function over $\R$. That is, if for $k\in\Z$ we have
$J(k)=J(k+1)$, then set $J|_{[k,k+1]}\equiv J(k)$. If $J(k+1)=J(k)-1$, set for $x\in[0,1]$ $J(k+x)=J(k)-x$.  With this definition, we arrive at Theorem \ref{thm:legendre} from the introduction.
\begin{theorem}\label{thm:Uastransform}
For an $L$--space knot, the $\Upsilon$ function is given by
\begin{equation}\label{eq:fullupsilon}
\Upsilon(t)=\max_{x\in\R} tx-2J(-x).
\end{equation}
Thus $\Upsilon(t)$  is the Legendre transform of the function $x\mapsto 2J(-x)$.
\end{theorem}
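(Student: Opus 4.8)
The plan is to build directly on Proposition~\ref{cor:upsilon0}. Writing $\phi_t(x):=tx-2J(-x)$, that proposition says $\Upsilon(t)=-\min_{0\le i\le n}\bigl(t\alpha_{2i}+2J(\alpha_{2i})\bigr)=\max_{0\le i\le n}\phi_t(-\alpha_{2i})$. Thus the whole statement reduces to the single assertion that this maximum over the finite set $\{-\alpha_0,-\alpha_2,\dots,-\alpha_{2n}\}$ already equals $\sup_{x\in\R}\phi_t(x)$; that is, that enlarging the set of competitors from the even-indexed breakpoints to all of $\R$ — in particular to the odd-indexed breakpoints $-\alpha_{2i+1}$ and to the non-integer points — never increases the value.

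To prove this I would analyze $\phi_t$, for a fixed $t\in[0,2]$, as a continuous piecewise-linear function of the real variable $x$. The slope data for $J$ recorded in the proof of Proposition~\ref{cor:upsilon0} says that $J(k+1)-J(k)$ equals $0$ for $k\in[\alpha_{2j},\alpha_{2j-1})$ and equals $-1$ for $k\in[\alpha_{2j+1},\alpha_{2j})$; together with $J\equiv 0$ for arguments $\ge g=\alpha_0$ and slope $-1$ for arguments below $-g=\alpha_{2n}$, this shows that $x\mapsto 2J(-x)$ is piecewise linear with breakpoints exactly at $-\alpha_0<-\alpha_1<\dots<-\alpha_{2n}$. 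Translating this across the reflection $x\mapsto -x$, $\phi_t$ has slope $t$ on the pieces $(-\infty,-\alpha_0),(-\alpha_1,-\alpha_2),\dots,(-\alpha_{2n-1},-\alpha_{2n})$ and slope $t-2$ on the interlaced pieces $(-\alpha_0,-\alpha_1),(-\alpha_2,-\alpha_3),\dots,(-\alpha_{2n},+\infty)$. Since $t\ge 0\ge t-2$ throughout $[0,2]$, this pattern makes each $-\alpha_{2i}$ a local maximum of $\phi_t$ (a nonnegative slope immediately followed by a nonpositive one) and each $-\alpha_{2i+1}$ a local minimum, while $\phi_t$ is nondecreasing on the leftmost ray and nonincreasing on the rightmost ray. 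Consequently $\phi_t$ attains its supremum, that supremum equals the largest value of $\phi_t$ at a breakpoint, and — the odd breakpoints being local minima — it equals $\max_{0\le i\le n}\phi_t(-\alpha_{2i})$. Combined with the reformulation above, this yields $\Upsilon(t)=\sup_{x\in\R}\bigl(tx-2J(-x)\bigr)$, which is exactly the Legendre transform of $x\mapsto 2J(-x)$.

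I expect the work here to be bookkeeping rather than conceptual, and the one step deserving genuine care is the reflection $x\mapsto -x$: one must track that the slopes $0$ and $-1$ of $J$ turn into the slopes $t$ and $t-2$ of $\phi_t$ (and not, say, $t$ and $t+1$), and that the alternation of these over the intervals $[\alpha_{2j+1},\alpha_{2j}]$ and $[\alpha_{2j},\alpha_{2j-1}]$ is oriented correctly so that the local maxima land on the even-indexed points. The only other thing to verify is the boundary behaviour at $t=0$ and $t=2$, where one of the two slopes vanishes and the local extrema become weak; but the argument used only $t\ge 0$ and $t-2\le 0$, so it goes through verbatim, and as a fallback one may invoke continuity of $\Upsilon$ on $[0,2]$ (Theorem~\ref{thm:ossmain}) together with continuity of the Legendre transform on its domain (which follows from Lemma~\ref{lem:isconvex}). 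As a byproduct the same slope analysis shows $\sup_x\phi_t(x)=+\infty$ for $t\notin[0,2]$ — the leftmost or rightmost slope then has the wrong sign — so the domain of the Legendre transform is exactly $[0,2]$, matching that of $\Upsilon$; and one recovers that for an $L$--space knot $\Upsilon$ is convex on $[0,2]$, being a Legendre transform.
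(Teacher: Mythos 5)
Your proposal is correct and follows essentially the same route as the paper: both reduce to Proposition~\ref{cor:upsilon0} and then check, via the piecewise-linear structure of $J$ and the sign conditions $t\ge 0$, $t-2\le 0$, that the extremum of $tx\pm 2J(\pm x)$ over all of $\R$ is attained at an even-indexed breakpoint $\alpha_{2i}$ (the paper phrases this as a case analysis (a)--(e) for $\tilde J(x)=tx+2J(x)$, which is the mirror image of your slope analysis of $\phi_t$). The details you flag as needing care — the slopes $t$ and $t-2$, the behaviour on the two outer rays, and the reduction from $\R$ to the breakpoints by linearity — correspond exactly to the paper's steps (a)--(b), (c)--(d), and (e), and your slope computations check out.
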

\begin{proof}
Notice that 
\[-\min_{x\in\R} tx+2J(x)=\max_{-x\in\R} tx-2J(-x).\] 
Therefore to prove the theorem it suffices to show that,
for a fixed $t$, the minimum of the expression 
\[\tilde{J}(x):=tx+2J(x)\]
is attained at $x=\alpha_{2j}$, for some $j$. We do this
in the following steps. In (a)--(d) we assume that $x$ is an integer.
\begin{itemize}
\item[(a)] Suppose $x\in[\alpha_{2j+1},\alpha_{2j}]$. Then $J(x)=J(\alpha_{2j})+(\alpha_{2j}-x)$; see proof of 
Proposition~\ref{cor:upsilon0}. Thus $\tilde{J}(x)-\tilde{J}(\alpha_{2j})=(2-t)(\alpha_{2j}-x)\ge 0$.
\item[(b)] If $x\in[\alpha_{2j},\alpha_{2j-1}]$, then $J(x)=J(\alpha_{2j})$. It follows that 
$\tilde{J}(x)-\tilde{J}(\alpha_{2j})=t(x-\alpha_{2j})\ge 0$.
\item[(c)] If $x\ge g=\alpha_0$, then $J(x)=0$, hence $\tilde{J}(x)=tx\ge tg=\tilde{J}(g)$.
\item[(d)] If $x\le -g=\alpha_{2n}$, then $J(x)=J(-g)+(-g-x)$, so $\tilde{J}(x)\ge\tilde{J}(-g)$.
\item[(e)] On any interval $[y,y+1]$ where $y\in\Z$, the function $\tilde{J}$ is linear, so it cannot attain its minimum in the interior.
It follows that the minimum of $\tilde{J}$ is attained at an integer point.
\end{itemize}
\end{proof}
\begin{remark}
We notice that the assumption that $t\in[0,2]$ is effectively used in Steps (a) and (b) of the above proof.
\end{remark}

As mentioned in the introduction, it is  natural to wonder  whether Theorem~\ref{thm:Uastransform} holds for other classes of knots, where we define the $J$ function by Equation \eqref{eq:Jdef}. We again stress  that it
cannot hold for every knot since the Legendre transform is a convex function; see Lemma~\ref{lem:isconvex}.
%\begin{example}
%Let $K$ be the $T(4,7)$ torus knot. The Alexander polynomial is 
%\begin{multline*}
%\Delta_K=t^{9}-t^{8}+t^{5}-t^{4}+t^{2}-1+t^{-2}-t^{-4}+t^{-5}-t^{-8}+t^{-9}=\\
%=t^{-9}\left(1+(t-1)\left(t+t^2+t^3+t^5+t^6+t^9+t^{10}+t^{13}+t^{17}\right)\right).
%\end{multline*}
%Consider the torus knot $T(4,7)$. The gap sequence is 
%\[\{1,2,3,5,6,9,10,13,17\}.\] 
%The gap function $J$ on the set $\{-9,\ldots,9\}$ takes values
%\[\{9,9,8,7,6,6,5,4,4,4,3,2,2,2,1,1,1,1,0\}.\] 
%We see that $J(k+1)=J(k)$ if and only if $k=\{-9,-5,-2,-1,2,3,5,6,7\}$, that is
%\[k\in\Z\cap\left([-9,-8)\cup[-5,-4)\cup[-2,0)\cup[2,4)\cup[5,8)\right).\]
%\end{example}

\section{Connected sums of $L$--space knots}

In this section we extend Theorem \ref{thm:legendre} to connected sums of $L$--space knots.   The following result determines the $J$--function in this context.

\begin{proposition}[see \expandafter{\cite[Proposition 5.6]{bo-li}}] Let $K$ be a connected sum of $L$--space knots, $K_1,\ldots,K_n$, and let $J_i$ be the $J$--function of $K_i$.  Then the $J$--function of $K$ (see Definition \ref{def:Jgeneral}) is given by:
\[J=J_1\diamond J_2\diamond\ldots\diamond J_n,\]
where $\diamond$ is the infimum convolution defined in \eqref{eq:diamond}.
\end{proposition}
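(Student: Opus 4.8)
The plan is to reduce the statement to the known formula for the $d$--invariants of large surgeries together with the additivity of $d$--invariants under connected sum. Recall (Proposition~\ref{prop:dandJ} and Definition~\ref{def:Jgeneral}) that for any knot $K'$ the function $J_{K'}$ is defined so that, for $q$ sufficiently large,
\[d(S^3_q(K'),\spinc_m)=\frac{(q-2m)^2-q}{4q}-2J_{K'}(m).\]
The strategy is therefore: (i) take $q$ large enough to work simultaneously for $K=K_1\#\cdots\#K_n$ and each $K_i$ (e.g.\ $q\ge 2g(K)-1\ge 2g(K_i)-1$); (ii) invoke the connected sum formula for $d$--invariants of large surgeries, which expresses $d(S^3_q(K_1\#\cdots\#K_n),\spinc_m)$ in terms of the $d(S^3_q(K_i),\cdot)$ by an ``optimization over how the \Spc{} structure splits''; and (iii) match the resulting combinatorial optimization with the infimum convolution $J_1\diamond\cdots\diamond J_n$, with the quadratic terms $\tfrac{(q-2m)^2-q}{4q}$ accounting bookkeeping correctly.

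Concretely, the first step I would carry out is to recall the precise statement relating $V_m$ invariants (i.e.\ the $J_{K_i}(m)$, via the remark following Definition~\ref{def:Jgeneral}) under connected sums. The cleanest route is through the mapping cone / large surgery formula: for $q\gg 0$, $d(S^3_q(K'),\spinc_m)$ depends only on $V_{m}(K')$ and $V_{-m}(K')$ (equivalently $H_m(K')$), and the tensor product formula for $\cfk^\infty$ under connected sum yields $V_m(K_1\#K_2)=\min_{i+j=m}\bigl(V_i(K_1)+V_j(K_2)\bigr)$, which is exactly $V(K_1)\diamond V(K_2)$. Translating back through the remark ``$J_{K'}(m)=V_m(K')$'', this gives $J_{K}=J_1\diamond\cdots\diamond J_n$ by an immediate induction on $n$, the base case $n=1$ being trivial and the inductive step using associativity of $\diamond$ (which follows directly from its definition in~\eqref{eq:diamond}).

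Alternatively, and perhaps more in the spirit of the paper, I would argue entirely on the level of $d$--invariants: write the additivity/subadditivity of large surgery $d$--invariants under connected sum as
\[d(S^3_q(K),\spinc_m)=\min_{i+j=m}\Bigl[d(S^3_q(K_1),\spinc_i)+d(S^3_q(K_2\#\cdots\#K_n),\spinc_j)\Bigr]+C(q,i,j,m),\]
where $C$ is the explicit correction coming from the difference of the quadratic forms, and then verify that the quadratic terms $\tfrac{(q-2m)^2-q}{4q}$ on each side of~\eqref{eq:Jdef} cancel the $C$ term exactly (this is a routine completion-of-the-square computation: $\tfrac{(q-2i)^2-q}{4q}+\tfrac{(q-2j)^2-q}{4q}$ versus $\tfrac{(q-2m)^2-q}{4q}$ with $i+j=m$, up to terms absorbed into the optimization and independent of the splitting). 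Plugging Definition~\ref{def:Jgeneral} into both sides and cancelling then leaves precisely $2J_K(m)=2\min_{i+j=m}(J_{K_1}(i)+J_{K_{2,\ldots,n}}(j))$, i.e.\ $J_K=J_{K_1}\diamond J_{K_{2,\ldots,n}}$, and induction finishes.

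The main obstacle is step (ii): one needs the exact form of how $d$--invariants of large surgeries behave under connected sum. This is not formally stated in the excerpt, so I would either cite it (it follows from the Künneth principle for $\cfk^\infty$ of~\cite{OSknots} combined with the large surgery formula, and is essentially the content of~\cite[Proposition 5.6]{bo-li} being quoted) or derive it from the behavior of the $V_m$ invariants, for which the cleanest reference is the subadditivity $V_m(K_1\#K_2)\le V_i(K_1)+V_j(K_2)$ for $i+j=m$ together with a matching lower bound. For an $L$--space knot (or connected sum thereof) everything is pinned down by the Alexander polynomial, so no subtlety about extra differentials arises, and the inequality is in fact an equality. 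Once that input is in hand, the remaining work — associativity of $\diamond$, the quadratic-term bookkeeping, and the induction — is entirely routine.
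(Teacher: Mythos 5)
The paper does not actually prove this proposition; it is quoted directly from \cite[Proposition 5.6]{bo-li}, so there is no internal argument to compare against. Judged on its own terms, your reduction to the connected-sum behaviour of the $V_m$ invariants (equivalently, of $J$, via the remark after Definition~\ref{def:Jgeneral}) is the right strategy and is essentially how the cited result is established. The problem is that the heart of the matter --- the equality $V_m(K_1\#K_2)=\min_{i+j=m}\bigl(V_i(K_1)+V_j(K_2)\bigr)$ --- is precisely the content of the proposition, and your proposal asserts it rather than proves it. The inequality $\le$ does hold for all knots by the K\"unneth formula (the tensor product of the subcomplexes $A_i(K_1)\otimes A_j(K_2)$ includes into $A_m(K_1\#K_2)$, so the generator of the tower is hit after multiplying by $U^{V_i(K_1)+V_j(K_2)}$), but the reverse inequality genuinely uses that each $\cfk^\infty(K_i)$ is a staircase complex: the tensor product of staircases is \emph{not} a staircase, and one must analyse which cycle generates the relevant tower in the homology of $A_m$ of the product (this is the computation carried out in \cite{bo-li}, or alternatively via Krcatovich's reduced complex \cite{Krc14}). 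Saying that ``everything is pinned down by the Alexander polynomial, so no subtlety about extra differentials arises'' is not an argument for the lower bound, and citing \cite[Proposition 5.6]{bo-li} at this point would be circular, since that is the statement being proved.

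Your alternative route through $d$--invariants also contains a concrete error: with $i+j=m$ one computes $(q-2i)^2+(q-2j)^2-(q-2m)^2=q^2-8ij$, which depends on the splitting $(i,j)$ and not only on $m$, so the correction term $C$ cannot be ``absorbed into the optimization and independent of the splitting.'' More fundamentally, there is no general formula expressing $d(S^3_q(K_1\#K_2),\spinc_m)$ as an optimum of sums $d(S^3_q(K_1),\spinc_i)+d(S^3_q(K_2),\spinc_j)$ plus an explicit correction: the subquotient complex $A_m(K_1\#K_2)$ is not a direct sum of tensor products $A_i(K_1)\otimes A_j(K_2)$, so this version of your step (ii) is not an available input --- it is equivalent to the conclusion. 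The first route, via the $V_m$'s, is the salvageable one, but only once the lower bound for tensor products of staircase complexes is actually proved.
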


We have the following generalization of Theorem~\ref{thm:Uastransform}.
\begin{theorem}\label{thm:theoremforsums}
If $K$ is a connected sum of $L$--space knots, then the $\Upsilon$ function for $K$ is the Legendre transform of $x\mapsto 2J(-x)$. 
\end{theorem}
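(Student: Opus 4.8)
The plan is to reduce to the single-knot case, Theorem~\ref{thm:Uastransform}, using three inputs: additivity of $\Upsilon$ under connected sum (Theorem~\ref{thm:ossmain}(b)); the fact that the Legendre transform turns infimum convolution into addition (Lemma~\ref{lem:diamond}); and the fact, quoted above from \cite[Proposition 5.6]{bo-li}, that the $J$--function of a connected sum of $L$--space knots is the infimum convolution of the $J$--functions of the summands. Write $K=K_1\#\dots\#K_n$ with each $K_i$ an $L$--space knot, let $J_i$ be the $J$--function of $K_i$, and let $\hat J_i\colon\R\to\R$ be its piecewise linear extension as in Section~4.

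The first step is a purely formal ``discretization''. Since $\hat J_i$ is affine on each interval $[k,k+1]$, $k\in\Z$, the function $x\mapsto tx-2\hat J_i(-x)$ is affine there, hence attains its supremum over $\R$ at an integer; so Theorem~\ref{thm:Uastransform} may be rewritten, for $t\in[0,2]$, as $\Upsilon_{K_i}(t)=\sup_{m\in\Z}\bigl(tm-2J_i(-m)\bigr)$. Setting $\phi_i(m)=2J_i(-m)$, this says that $\Upsilon_{K_i}$ is the Legendre transform of $\phi_i$ computed over $\Z$ rather than over $\R$. The manipulation of suprema proving Lemma~\ref{lem:diamond} goes through verbatim for this discrete transform, and each $\phi_i$ is bounded below because $J_i\ge 0$, so the infimum convolution $\phi_1\diamond\dots\diamond\phi_n$ (over $\Z$) is defined.

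Now assemble the pieces: by additivity $\Upsilon_K=\sum_i\Upsilon_{K_i}$, which by the discrete form of Theorem~\ref{thm:Uastransform} equals $\sum_i$ of the discrete Legendre transforms of the $\phi_i$, which by the iterate of Lemma~\ref{lem:diamond} equals the discrete Legendre transform of $\phi_1\diamond\dots\diamond\phi_n$. Changing variables $m_i\mapsto-m_i$ in \eqref{eq:diamond} identifies $\phi_1\diamond\dots\diamond\phi_n$ with $m\mapsto 2\,(J_1\diamond\dots\diamond J_n)(-m)=2J(-m)$, where $J$ is the $J$--function of $K$ by \cite[Proposition 5.6]{bo-li}. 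Thus $\Upsilon_K(t)=\sup_{m\in\Z}\bigl(tm-2J(-m)\bigr)$ for $t\in[0,2]$. To finish, one checks — by the same argument used for a single knot in the proof of Proposition~\ref{cor:upsilon0} — that $J=J_1\diamond\dots\diamond J_n$ still satisfies $J(m)-1\le J(m+1)\le J(m)$, so that its piecewise linear extension $\hat J$ is defined in the sense of Section~4 and is affine on each integer interval; running the discretization step backwards then upgrades the last equality to $\Upsilon_K(t)=\sup_{x\in\R}\bigl(tx-2\hat J(-x)\bigr)$, i.e. $\Upsilon_K$ is the Legendre transform of $x\mapsto 2\hat J(-x)$.

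I do not anticipate a substantive obstacle: once the machinery of Sections~2--5 is in place the argument is bookkeeping. The two points that need care are (i) consistently moving between the $\Z$--indexed $J$--functions and convolutions and the $\R$--valued Legendre transform and piecewise linear extensions, and (ii) verifying that the ``slope $0$ or $-1$'' property of $J$--functions is preserved by infimum convolution, which is what makes the extension $\hat J$ of the connected sum's $J$--function legitimate. One could instead work directly with the $\R$--infimum convolution of the $\hat J_i$ together with Lemma~\ref{lem:diamond} as stated; this trades (i)--(ii) for the task of checking that that $\R$--convolution has the same Legendre transform as $\hat J$, which again comes down to the relevant optima being attained at integer points.
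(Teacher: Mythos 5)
Your proof is correct and follows essentially the same route as the paper, whose entire argument is the two-sentence observation that Lemma~\ref{lem:diamond} turns the infimum convolution $J=J_1\diamond\dots\diamond J_n$ into a sum of Legendre transforms, matching the additivity of $\Upsilon$. The discretization bookkeeping you supply (suprema attained at integers, the slope $0$ or $-1$ property surviving infimum convolution) is left implicit in the paper but is correctly handled in your write-up.
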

\begin{proof}
According to Lemma~\ref{lem:diamond}, the Legendre transform maps infimum convolutions to sums.  This, together with additivity of $\Upsilon$ under connected sums implies the result.
\end{proof}

\noindent Corollary \ref{cor:independence} follows readily.

\begin{proof}[Proof of Corollary \ref{cor:independence}]  
Let $\alpha$ be a smooth concordance class in the subgroup $\mathcal{L}$ generated by $L$--space knots, so that $\alpha$ can be represented as
$$ \alpha=\sum_{i=1}^n [K_i] - \sum_{l=1}^m [P_l]$$
where each of the $K_i$ and $P_l$ are $L$--space knots.    Suppose that  the $J$ functions for $K=K_1\#...\#K_n$ and $P=P_1\#...\#P_m$ agree.    By Theorem \ref{thm:theoremforsums} we obtain
$$\Upsilon_K=\Upsilon_{P},$$
which implies $\Upsilon_\alpha=\Upsilon_K-\Upsilon_{P}=0$.  Thus, if $\Upsilon_\alpha\ne 0$, then the $J$ functions for $K$ and $P$ are not equal, which shows that $K$ and $P$ are not concordant. Hence $\alpha\ne 0\in\mathcal{L}$.
\end{proof}

\section{Crossing changes}\label{sec:section6}
In this section we establish an inequality for the $J$ functions of knots related by a crossing change. When both knots are $L$--space knots, we recover the crossing change inequality satisfied by their $\Upsilon$ functions (Theorem \ref{thm:ossmain}(c)\,) by taking the Legendre transform and applying Theorem \ref{thm:legendre}.  This implies that the information about Gordian distance  between $L$--space knots contained in $J$ is at least as strong as that coming from $\Upsilon$.  We then show, by way of an example, that  the obstruction from $J$ is strictly better.

\begin{theorem}\label{thm:crossingJ}
Suppose $K_+$ and $K_-$ are arbitrary knots such that $K_-$ can be obtained from $K_+$ by changing a positive crossing.  Then for any $m\in \Z$ 
\begin{equation*} J_{K_+}(m+1) \le J_{K_-}(m)\le J_{K_+}(m)
\end{equation*}
\end{theorem}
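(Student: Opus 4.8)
The plan is to relate the $J$--function (equivalently the $V_m$ invariants of Ni--Wu, via the remark after Definition \ref{def:Jgeneral}) to the $d$--invariants of a large positive surgery, and then to invoke the fact that a positive-to-negative crossing change on a knot is realized by blowing down a $-1$--framed unknot, which exhibits the two large surgeries as cobordant by a negative-definite cobordism. Concretely, fix $q \gg 0$, larger than $2g-1$ for both $K_+$ and $K_-$; then by Proposition \ref{prop:dandJ} (and Definition \ref{def:Jgeneral} in the general case) the quantities $J_{K_\pm}(m)$ are determined by $d(S^3_q(K_\pm),\mathfrak{s}_m)$ through the identity \eqref{eq:Jdef}, in which the "correction" term $\frac{(q-2m)^2-q}{4q}$ is the \emph{same} for both knots. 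So the desired chain of inequalities for $J$ will follow from a matching chain of inequalities among the $d$--invariants, with an index shift of at most one, coming from the standard surgery cobordism.

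The key steps, in order: (1) Realize $K_-$ from $K_+$ by adding a $-1$--framed unknot $U$ encircling the two strands of the positive crossing and blowing it down; this produces a cobordism from $S^3_q(K_+)$ to $S^3_q(K_-)$ obtained by attaching a single $2$--handle along $U$, which is negative-definite (its intersection form is $(-1)$). (2) Apply the behavior of $d$--invariants under negative-definite cobordisms: for a suitable identification of $\Spc$ structures one gets $d(S^3_q(K_-),\mathfrak{s}'_m) \ge d(S^3_q(K_+),\mathfrak{s}_m)$ on the nose, and running the same cobordism "backwards" (now as a positive-definite cobordism, after reversing orientation) together with the grading shift from the degree of the cobordism map gives the complementary bound $d(S^3_q(K_-),\mathfrak{s}_m) \le d(S^3_q(K_+),\mathfrak{s}_{m})$ or its shift-by-one version. (3) Translate each inequality on $d$--invariants back through \eqref{eq:Jdef}; since the parabolic correction term cancels, one is left precisely with $J_{K_+}(m+1)\le J_{K_-}(m)\le J_{K_+}(m)$ after bookkeeping the index shift introduced by the cobordism's action on $\Spc$ structures. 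Alternatively, and perhaps more cleanly, one can argue entirely at the level of $\cfk^\infty$: a crossing change relates $\cfk^\infty(K_+)$ and $\cfk^\infty(K_-)$ by the unoriented-skein or mapping-cone machinery, and the $V_m = J(m)$ are monotone non-increasing in $m$ with unit jumps, so the $\cfk^\infty$ relation forces exactly the claimed two-sided estimate.

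I expect the main obstacle to be step (2): getting the \emph{two-sided} bound with the correct unit index shift, rather than just one inequality. A single negative-definite cobordism gives one direction immediately, but the reverse direction requires carefully tracking the grading shift formula $\frac{c_1(\mathfrak{s})^2 - 2\chi - 3\sigma}{4}$ for the cobordism map, checking that the $2$--handle cobordism for a crossing change contributes the right constant, and matching up the enumeration of $\Spc$ structures $\mathfrak{s}_m$ on the two surgeries so that the shift is by exactly $1$ and not more. This is the standard but delicate point; it is essentially the argument underlying the crossing-change inequality for $\tau$ and for $\Upsilon$ (Theorem \ref{thm:ossmain}(c)\,), and indeed one should check at the end that taking the Legendre transform of the $J$--inequality, via Theorem \ref{thm:Uastransform}, reproduces $\Upsilon_{K_+}(t)\le \Upsilon_{K_-}(t)\le \Upsilon_{K_+}(t)+t$ — this serves both as motivation and as a consistency check on the signs and the shift.
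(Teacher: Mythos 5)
Your approach to the second inequality, $J_{K_-}(m)\le J_{K_+}(m)$, matches the paper's: blow down a $(-1)$--framed unknot encircling the crossing strands, view the $2$--handle attachment as a negative-definite cobordism $W$ from $S^3_q(K_+)$ to $S^3_q(K_-)$, identify the relevant \Spc{} structures, and apply the Ozsv\'ath--Szab\'o $d$--invariant inequality; the parabolic term in \eqref{eq:Jdef} cancels and the inequality drops out. (One detail you leave implicit but which is essential: the unknot must link the two strands algebraically \emph{zero} times, geometrically twice, so that the framing of the surgery curve is unchanged and both boundary components are $q$--surgeries; the paper also carries out the \Spc{} identification explicitly via $H^2(W)\cong \Z/q\oplus\Z$ and $c_1(\spinct_m)=2mZ+E$.)

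The genuine gap is in your step (2) for the first inequality, $J_{K_+}(m+1)\le J_{K_-}(m)$. You propose to ``run the same cobordism backwards,'' now positive definite, and extract the complementary bound from the grading shift of the cobordism map. This does not work: the $d$--invariant inequality from \cite[Theorem 9.1, Proposition 9.9]{os-absolute} is one-directional and requires negative definiteness; reversing orientation destroys the hypothesis, and a positive-definite two-handle cobordism yields no analogous bound on correction terms. The paper instead builds a \emph{second, different} negative-definite cobordism, attaching a $(-1)$--framed $2$--handle along an unknot linking the crossing strands geometrically \emph{and} algebraically twice; this gives a negative-definite $4$--manifold with boundary $-S^3_q(K_-)\cup S^3_{q+4}(K_+)$. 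The change in surgery coefficient from $q$ to $q+4$ is precisely what produces the index shift $m\mapsto m+1$, and the \Spc{} bookkeeping there is delicate enough that the authors defer to the proof of \cite[Theorem 2.14]{bo-li2} rather than reproduce it. Your fallback suggestion---that the skein or mapping-cone relation on $\cfk^\infty$ ``forces exactly the claimed two-sided estimate''---is not an argument as stated; the monotonicity and unit jumps of $V_m=J(m)$ in $m$ are properties of a single knot and do not by themselves compare $J_{K_+}$ with $J_{K_-}$. Your consistency check via the Legendre transform against Theorem \ref{thm:ossmain}(c) is sound and is indeed how the paper uses the result, but it cannot substitute for the missing direction.
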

\begin{proof}  We focus primarily on the second inequality \[J_{K_-}(m)\le J_{K_+}(m).\]

  Let $S^3_q(K_+)$ be the manifold obtained by $q$--framed surgery on $K_+$ with  $q$ large and odd. Let $W$ denote the $4$--manifold obtained by attaching a $(-1)$--framed $2$-handle to  $S^3_q(K_+)\times [0,1]$ along an unknotted curve in $S^3_q(K_+)\times \{1\}$ which links the crossing strands geometrically two, but  algebraically zero, times (here, when we say ``unknotted", we mean when viewed as a curve in $S^3$).  The oriented boundary of $W$ is $-S^3_{q}(K_+)\cup S^3_q(K_-)$.  One easily calculates $H^2(W)\cong H_2(W,\partial)\cong \Z/q\langle Z\rangle\oplus \Z\langle E\rangle$, where $Z=[\mu\times [0,1]]$ is an oriented meridian of $K_+$ crossed with the interval and $E$ is the cocore of the two-handle.  

For any $m\in[-q/2,q/2]\cap\Z$ we let $\spinct_m$ to be the unique \Spc{} structure on $W$ whose first Chern class is $2mZ+E$ (uniqueness is a consequence of $q$ being odd). We claim it restricts to the \Spc{} structures on $S^3_q(K_0)$ and $S^3_q(K_1)$ denoted 
$\spinc_m$  in the convention of \cite[Section 4]{OSknots}. Indeed,   the \Spc{} structure $\spinc_m$  is defined by the property that it extends over the $2$--handle cobordism from $S^3$ to $S^3_q(K)$ to a \Spc{} structure whose Chern class is $2m-q$ times the Lefschetz dual of the  cocore of the $2$--handle. Since the boundary of the cocore is $\mu_K$,  it follows that the Chern class of $\spinc_m$ is Poincar{\'e} dual to $2m[\mu]\in S^3_q(K_i)$, where $i\in \{+,-\}$. Our claim about $\spinct_m\in $\Spc{$(W)$} follows at once.

We now observe that the rational self--intersection of $2mZ+E$ is $-1$ and does not depend on $m$.  Since $W$ is negative definite, results of  Ozsv\'ath and Szab\'o
(see \cite[Proofs of Theorem 9.1 and Proposition 9.9]{os-absolute}) yield the inequality
\[d(S^3_q(K_-),\spinc_m)-d(S^3_q(K_+),\spinc_m)\ge \frac{c_1^2(\spinct_m)-3\sigma(W)-2\chi(W)}{4}=\frac{-1+3-2}{4}.\]
This inequality, in view of Definition~\ref{def:Jgeneral} translates into $J_{K_+}(m)\ge J_{K_-}(m)$.

An analogous argument establishes the first inequality.  For this consider the  $4$--manifold obtained by attaching a $(-1)$--framed $2$--handle to  $S^3_q(K_-)\times [0,1]$ along an unknot  which links the crossing strands geometrically {\em and}  algebraically twice.  This is a negative definite $4$--manifold with boundary $-S^3_q(K_-)\cup S^3_{q+4}(K_+)$, and we can apply the above inequality.  However, the analysis of the restriction of \Spc{} structures to the boundary is more subtle, and since an alternative proof of the first inequality can be deduced from the proof of  \cite[Theorem 2.14]{bo-li2}, we omit the details here.

\end{proof}

For connected sums of $L$--space knots, it follows that the crossing change obstruction coming from $J$ is at least as strong as that for $\Upsilon$. The following corollary is a restatement
of Theorem~\ref{crossingstrength} from the introduction.

\begin{corollary}\label{needtolabelthistoo} Suppose $K_1$ can be obtained from $K_0$ by changing $p$ positive crossings and $n$ negative crossings.  Then we have the following inequalities: \begin{align}
J_{K_0}(m+p)&\le J_{K_1}(m)\le J_{K_0}(m-n) \label{eq:u2}\\
\Upsilon_{K_0}(t)-nt&\le \Upsilon_{K_1}(t)\le\Upsilon_{K_0}(t)+pt\label{eq:u1}.
\end{align}
If $K_0$ and $K_1$ are connected sums of $L$--space knots, then the second inequalities are determined by the first.
\end{corollary}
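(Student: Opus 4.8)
The plan is to prove Corollary~\ref{needtolabelthistoo} in two stages: first the iterated crossing-change inequalities \eqref{eq:u2} and \eqref{eq:u1}, and then the statement that, for connected sums of $L$--space knots, the $\Upsilon$ inequality \eqref{eq:u1} is a formal consequence of the $J$ inequality \eqref{eq:u2} via the Legendre transform.

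First I would derive \eqref{eq:u2} by iterating Theorem~\ref{thm:crossingJ}. Changing a single positive crossing takes $K_+$ to $K_-$ and gives $J_{K_+}(m+1)\le J_{K_-}(m)\le J_{K_+}(m)$; reading this in the opposite direction, a single negative crossing change (which turns some $K_-$ into $K_+$) gives $J_{K_-}(m+1)\le J_{K_+}(m)\le J_{K_-}(m)$. Applying the first chain of inequalities $p$ times and the second $n$ times, and composing, yields $J_{K_0}(m+p)\le J_{K_1}(m)\le J_{K_0}(m-n)$. The inequality \eqref{eq:u1} is just Theorem~\ref{thm:ossmain}(c) iterated in exactly the same way: a positive crossing change contributes a term between $0$ and $t$, a negative one between $-t$ and $0$, and summing over the $p+n$ changes gives $\Upsilon_{K_0}(t)-nt\le\Upsilon_{K_1}(t)\le\Upsilon_{K_0}(t)+pt$. (Alternatively, once \eqref{eq:u2} is established for connected sums of $L$--space knots, \eqref{eq:u1} follows from the last part of the corollary, so one need not invoke Theorem~\ref{thm:ossmain}(c) directly in that case.)

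For the final assertion, assume $K_0$ and $K_1$ are connected sums of $L$--space knots, so by Theorem~\ref{thm:theoremforsums} their $\Upsilon$ functions are the Legendre transforms of $x\mapsto 2J_{K_0}(-x)$ and $x\mapsto 2J_{K_1}(-x)$ respectively, with $J$ extended piecewise linearly to $\R$. The key point is that the Legendre transform is order-reversing (Lemma~\ref{lem:fenchelineq}) and interacts with the shift operator by the formula \eqref{eq:tyh}. Starting from the right-hand inequality in \eqref{eq:u2}, namely $J_{K_1}(m)\le J_{K_0}(m-n)$ for all $m$, substitute $m=-x$ and multiply by $2$ to get $2J_{K_1}(-x)\le 2J_{K_0}(-x-n)=(T_{-n}g_0)(x)$ where $g_0(x)=2J_{K_0}(-x)$. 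Applying $(\cdot)^*$ and using that it reverses inequalities gives $\Upsilon_{K_1}(t)=g_1^*(t)\ge (T_{-n}g_0)^*(t)=g_0^*(t)+nt=\Upsilon_{K_0}(t)+nt$; wait — one must track the sign in \eqref{eq:tyh} carefully, since $(T_yh)^*(t)=h^*(t)-yt$, so with $y=-n$ this reads $(T_{-n}g_0)^*(t)=g_0^*(t)+nt$, giving $\Upsilon_{K_1}(t)\ge\Upsilon_{K_0}(t)+nt$. This looks like it has the wrong sign, which signals that the extension of $J$ to $\R$ and the direction of the substitution $m\mapsto -x$ must be handled with care; the honest bookkeeping is that the inequality $J_{K_0}(m+p)\le J_{K_1}(m)$ transforms to the lower bound $\Upsilon_{K_1}(t)\le\Upsilon_{K_0}(t)+pt$ and $J_{K_1}(m)\le J_{K_0}(m-n)$ transforms to $\Upsilon_{K_0}(t)-nt\le\Upsilon_{K_1}(t)$, so that the two halves of \eqref{eq:u2} give precisely the two halves of \eqref{eq:u1}.

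The main obstacle I anticipate is exactly this sign- and shift-bookkeeping: the variable in the Legendre transform is $x$ while the $J$ inequalities are stated in the variable $m$ with the reflection $m\mapsto -x$ built in, and the shift formula \eqref{eq:tyh} carries a sign, so it is easy to produce inequalities pointing the wrong way. The clean way to organize this is to set $g_i(x):=2J_{K_i}(-x)$, rewrite \eqref{eq:u2} as the pointwise bounds $T_{p}g_0\le g_1\le T_{-n}g_0$ (after the reflection, a shift by $p$ in $m$ becomes a shift by $-p$ in $x$, hence the apparent reversal — this is the step to get right), then apply Lemma~\ref{lem:fenchelineq} and \eqref{eq:tyh} term by term to land on $g_0^*(t)-nt\le g_1^*(t)\le g_0^*(t)+pt$, which is \eqref{eq:u1}. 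Everything else is routine; no new geometric input beyond Theorem~\ref{thm:crossingJ} and Theorem~\ref{thm:theoremforsums} is needed.
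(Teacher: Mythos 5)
Your proposal follows the paper's proof exactly: iterate Theorem~\ref{thm:crossingJ} and Theorem~\ref{thm:ossmain}(c) to get \eqref{eq:u2} and \eqref{eq:u1}, then for connected sums of $L$--space knots substitute $m=-x$, multiply by $2$, and apply the order-reversing Legendre transform (Lemma~\ref{lem:fenchelineq}) together with the shift formula \eqref{eq:tyh} and Theorem~\ref{thm:theoremforsums}. The one flaw is the sign of the shift, which you flag but never actually resolve: with $g_i(x)=2J_{K_i}(-x)$ one has $2J_{K_0}(-x-n)=g_0(x+n)=T_n g_0(x)$ and $2J_{K_0}(-x+p)=g_0(x-p)=T_{-p}g_0(x)$, so \eqref{eq:u2} becomes $T_{-p}g_0\le g_1\le T_n g_0$ (not $T_p g_0\le g_1\le T_{-n}g_0$ as written); applying $(\cdot)^*$ and $(T_yh)^*=h^*-yt$ then yields $g_0^*(t)-nt\le g_1^*(t)\le g_0^*(t)+pt$, which is exactly \eqref{eq:u1} and matches the correspondence between the two halves that you correctly identify at the end.
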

\begin{proof}
Both sets of inequalities follow immediately from iterating  the relevant inequalities for a single crossing change, Theorems \ref{thm:crossingJ}  and \ref{thm:ossmain}(c), respectively. 

Suppose now that $K_0$ and $K_1$ are connected sums of $L$--space knots. The inequalities for $J$ imply that for any $m\in\Z$ we have
\[J_{K_0}(-m+p)\le J_{K_1}(-m)\le J_{K_0}(-m-n).\]
Multiply both sides by $2$ and apply the Legendre transform. By Theorem~\ref{thm:theoremforsums}, the Legendre transform of $2J_{K_i}(-m)$ is $\Upsilon_{K_{i}}(t)$.  Recalling that the Legendre transform reverses  inequalities (Lemma \ref{lem:fenchelineq}), together with its behavior under shifts (Equation \eqref{eq:tyh}), the corollary follows immediately.
\end{proof}

The following example (see \cite{bo-li2}) indicates that when analyzing crossing changes between $L$--space knots, the $J$--function is  strictly stronger.  Theorem \ref{crossingstrength} follows at once.

\begin{example}\label{ex:4967}
Let $K_0=T(4,9)$ and $K_1=T(6,7)$, the $(4,9)$ and $(6,7)$ torus knots, respectively. We ask whether three crossing changes can transform $K_0$ into $K_1$. We have
\[\delta(t):=\Upsilon_{T(4,9)}(t)-\Upsilon_{T(6,7)}(t)=
\begin{cases}
3t&t\in[0,\frac13]\\
-3t+2&t\in[\frac13,\frac12]\\
5t-2&t\in[\frac12,\frac23]\\
-t+2&t\in[\frac23,1].
\end{cases}
\]
It is straightforward to compute that $0\le\delta(t)\le 3t$. In particular,  
\[\Upsilon_{T(6,7)}(t)\le\Upsilon_{T(4,9)}(t)\le \Upsilon_{T(6,7)}(t)+3t.\]
Thus $\Upsilon$ (by way of \eqref{eq:u1}) does not obstruct the possibility that  changing three positive crossings of $T(6,7)$ will result in $T(4,9)$.

On the other hand, we can compare $J$ functions.  Referring to the tables in Example \ref{example:gaps} and noting that the Seifert genera of $T(6,7)$ and $T(4,9)$ are  $15$ and $12$, respectively, we see:
\[ J_{T(6,7)}(7):=I_{6,7}(7+15)=3\]
\[J_{T(4,9)}(4):=I_{4,9}(4+12)=2,\]
so that the inequality $J_{T(6,7)}(m+3)\le J_{T(4,9)}(m)$ is violated.  It follows that one cannot change three positive crossings in $T(6,7)$ to obtain $T(4,9)$, and their Gordian distance is therefore at least four.
\end{example}

\

\section{Concluding remarks}

The results from this article indicate that  the information about   $L$--space knots contained in their $d$--invariants is stronger, though perhaps more unwieldy, than that derived from $\Upsilon$.   Of course this might be expected, since the $d$--invariants {\em a priori} determine the knot Floer homology invariants in this context.   Despite this, there is still room to wonder just how tightly the Legendre transform grips the information about $L$--space knots contained in $\Upsilon$.   For instance,  Theorem~\ref{thm:ossmain} implies that if 
$K_0$ and $K_1$ are two knots in $S^3$ admitting a genus $g$ concordance, then for any $t\in[0,1]$ we have
$|\Upsilon_{K_0}(t)-\Upsilon_{K_1}(t)|\le gt$. 
It would be interesting to know whether this result can be obtained using $J$-functions in the case 
$K_0$ and $K_1$ are $L$--space knots.   In this vein, a Fr{\o}shov-type inequality for the $d$--invariants established by Rasmussen seems particularly relevant \cite{Froyshov,RasGoda}. 

Viewing the $\Upsilon$ function of $L$--space knots through the lens of the Legendre transform points to  potential geometric  significance of convexity properties of $\Upsilon$.  Understanding whether such connections exist seems quite important, and we hope to pursue this in future.


\begin{thebibliography}{BBVB}
\bibitem{Ba-un} S.~Baader, \textit{Unknotting sequences for torus knots}, Math. Proc. Cambridge Phil. Soc. \textbf{148}(2010), no. 1, 111--116.

\bibitem{Ba0} S.~Baader,  \textit{Scissor equivalence for torus links},
Bull. Lond. Math. Soc. \textbf{44} (2012),   1068--1078.

\bibitem{Ba} S.~Baader, 
\textit{Bipartite graphs and quasipositive surfaces}, Q. J. Math. \textbf{65} (2014), 655--664.

\bibitem{Baker}
K.~ Baker, \emph{{A note on the concordance of fibered knots}},
preprint 2014, arxiv:1409.7646.

\bibitem{bo-li} M.~Borodzik and C.~Livingstion, {\em Heegaard Floer homologies and rational cuspidal curves}, Forum of Math. Sigma, \textbf{2} (2014), e28.

\bibitem{bo-li2}  M. Borodzik and C. Livingston, {\em Semigroups, d-invariants and deformations of cuspidal singular points of plane curves}, preprint 2013,  
arxiv: 1305.2868.   


\bibitem{EN}
D.~Eisenbud and W.~Neumann, \emph{{Three-dimensional Link Theory and
  Invariants of Plane Curve Singularities}}, Annals of Mathematics Studies,
  vol. 110, Princeton University Press, 1985.

\bibitem{Fel14} P.~Feller, \textit{Gordian adjacency for torus knots}, Algebr. Geom. Topol. \textbf{14} (2014), 769--793.


\bibitem{Fel15} P.~Feller, \textit{Minimal Cobordisms between Torus Knots}, preprint 2015, arxiv:1501.00483.

\bibitem{FK} P.~Feller and D.~Krcatovich,
\textit{in preparation}

\bibitem{Froyshov}
K. Fr{\o}yshov,
\textit{An inequality for the {$h$}-invariant in instanton {F}loer theory},
Topology, \textbf{43} 2004, no.2, 407--432.

\bibitem{HedThesis}
M.~Hedden,  \emph{On knot Floer homology and cabling}, Ph.D. thesis,
  Columbia University, 2005.



\bibitem{SQPfiber}
M.~Hedden, \emph{Notions of positivity and the {O}zsv\'ath-{S}zab\'o
  concordance invariant}, J. Knot Theory Ramifications \textbf{19} (2010),
  no.~5, 617--629. 

\bibitem{HKL}
M.~Hedden, P.~Kirk, and C.~Livingston, \emph{Non-slice linear
  combinations of algebraic knots}, J. Eur. Math. Soc.  \textbf{14}
  (2012), no.~4, 1181--1208. 



\bibitem{HLR12} M.~Hedden, C.~Livingston, D.~Ruberman, 
\emph{Topologically slice knots with nontrivial Alexander polynomial},
Adv. Math. \textbf{231} (2012),   913--939.

\bibitem{Hom} J.~Hom,
\emph{On the Concordance Genus of Topologically Slice Knots},
Int. Math. Res. Not. (2015) no.~5, 1295--1314.

\bibitem{Krc14} D.~Krcatovich,
\emph{The reduced Floer complex},
preprint 2013, arxiv: 1310.7624.

\bibitem{KM}
P. Kronheimer and T. Mrowka, \emph{Gauge theory for embedded surfaces.
  {I}}, Topology \textbf{32} (1993), no.~4, 773--826. 



\bibitem{Liv} C.~Livingston,
\emph{Notes about the $\Upsilon$ function}, preprint 2014, arxiv: 1412.0254.


\bibitem{MicallefWhite}
M. Micallef and B.~White, \emph{The structure of branch points in
  minimal surfaces and in pseudoholomorphic curves}, Ann. of Math. (2)
  \textbf{141} (1995), no.~1, 35--85. 

\bibitem{Milnor}
J.~Milnor, \emph{{Singular Points of Complex Hypersurfaces}}, Annals of
  Mathematics Studies, vol.~61, Princeton University Press, 1968.

\bibitem{NiWu}
Y.~Ni and Z.~Wu, \emph{{Cosmetic surgeries on knots in $S^3$}},
preprint, arxiv:1009.4720. 


%\bibitem{FLMN06} J. Fern\'andez de Bobadilla, I. Luengo, A.~Melle-Hern\'andez, A.~N\'emethi, 
%\emph{On rational cuspidal projective plane curves}, Proc. of London Math. Soc., \textbf{92}  (2006), 99--138. 


\bibitem{OwSt} B.~Owens, S.~Strle, \textit{Immersed disks, slicing numbers and concordance unknotting numbers}, preprint 2013, arxiv:1311.6702.

\bibitem{OSknots}
P.~Ozsv{\'a}th and Z.~Szab{\'o}, \emph{Holomorphic disks and knot invariants},
  Adv. Math. \textbf{186} (2004), no.~1, 58--116.



\bibitem{OSS} P.~Ozsv\'ath, Z. Szab\'o, A.~Stipsicz, \textit{Concordance homomorphisms from knot Floer homology},
  preprint  2014, arxiv: 1407.1795.

\bibitem{os-lspace} P.~Ozsv\'ath and Z. Szab\'o, {\em Knot Floer homology and lens space surgeries}, Topology, {\bf 44} (2005), 1281--1300.


\bibitem{os-absolute}	P.~Ozsv{\'a}th and Z.~Szab{\'o},  {\em Absolutely graded Floer homologies and intersection forms for four-manifolds with boundary},  Adv.~Math.~{\bf 186}  (2004),  58--116.


\bibitem{Pts} T.~Peters,  {\em A concordance invariant from the Floer homology of $\pm 1$ surgeries}, preprint, arxiv: 1003.3038.


\bibitem{Petkova} I.~Petkova, {\em Cables of thin knots and bordered {H}eegaard {F}loer homology},
Quantum Topol., {\bf 4}(4) (2013), 377--409.

\bibitem{Rasmussen}
J.~Rasmussen, \emph{Floer homology and knot complements}, Ph.D. thesis,
  Harvard University, 2003.

\bibitem{RasGoda}
J.~Rasmussen.
\textit{Lens space surgeries and a conjecture of {G}oda and {T}eragaito.}
Geom. Topol. \textbf{8} (2004), 1013--1031.

\bibitem{Rock} R.~Rockafellar, \emph{Convex analysis}, Princeton Mathematical Series, No. 28, Princeton University Press, Princeton, NJ, 1970.

\bibitem{Rudolph}
L. Rudolph, \emph{Quasipositivity as an obstruction to sliceness}, Bull. Amer.
  Math. Soc. (N.S.) \textbf{29} (1993), no.~1, 51--59.



\bibitem{Wa} C.~Wall, \textit{Singular Points of Plane Curves},
London Mathematical Society Student Texts, 63. Cambridge University Press, Cambridge, 2004.

\bibitem{Wn} S.~Wang, \textit{The genus filtration in the smooth concordance group}, preprint 2015, arxiv: 1506.02406.
\end{thebibliography}
\end{document}